\theoremstyle{plain}
\newtheorem{theorem}{Theorem}[section]
\newtheorem{proposition}[theorem]{Proposition}
\newtheorem{lemma}[theorem]{Lemma}
\newtheorem{corollary}[theorem]{Corollary}
\newtheorem{conjecture}[theorem]{Conjecture}
\theoremstyle{definition}
\newtheorem{definition}[theorem]{Definition}
\newtheorem{example}[theorem]{Example}
\newtheorem{question}[theorem]{Question}
\newtheorem{remark}[theorem]{Remark}
\numberwithin{equation}{section}
\newcommand{\link}{{\rm link}}
\newcommand{\inte}{{\rm int}}
\newcommand{\st}{{\rm st}}
\newcommand{\RK}{{\rm K}}
\newcommand{\RL}{{\rm L}}
\newcommand{\vset}{{\rm vert}}
\newcommand{\kk}{{\mathbf k}}
\renewcommand{\to}{\rightarrow}
\newcommand{\sm}{{\smallsetminus}}
\begin{document}
\title[Flag subdivisions and $\gamma$-vectors]
{Flag subdivisions and $\gamma$-vectors}

\author{Christos~A.~Athanasiadis}
\address{Department of Mathematics
(Division of Algebra-Geometry)\\
University of Athens\\
Panepistimioupolis\\
15784 Athens, Hellas (Greece)}
\email{caath@math.uoa.gr}

\date{January 9, 2012; revised, May 28, 2012}
\thanks{2000 \textit{Mathematics Subject Classification.} Primary 05E45;
\, Secondary 05E99.}
\keywords{Flag complex, homology sphere, simplicial subdivision, flag
subdivision, face enumeration, $\gamma$-vector, local $h$-vector}

\begin{abstract}
The $\gamma$-vector is an important enumerative invariant of a flag
simplicial homology sphere. It has been conjectured by Gal that this
vector is nonnegative for every such sphere $\Delta$ and by Reiner,
Postnikov and Williams that it increases when $\Delta$ is replaced
by any flag simplicial homology sphere which geometrically subdivides
$\Delta$. Using the nonnegativity of the $\gamma$-vector in dimension
3, proved by Davis and Okun, as well as Stanley's theory of simplicial
subdivisions and local $h$-vectors, the latter conjecture is confirmed
in this paper in dimensions 3 and 4.
\end{abstract}

\maketitle

 \section{Introduction}
  \label{sec:intro}

  This paper is concerned with the face enumeration of an important class of
  simplicial complexes, that of flag homology spheres, and their subdivisions.
  The face vector of a homology sphere (more generally, of an Eulerian simplicial
  complex) $\Delta$ can be conveniently encoded by its $\gamma$-vector
  \cite{Ga05}, denoted $\gamma (\Delta)$. Part of our motivation comes from the
  following two conjectures (we refer to Section \ref{sec:sub} for all relevant
  definitions). The first, proposed by Gal \cite[Conjecture 2.1.7]{Ga05}, can be
  thought of as a Generalized Lower Bound Conjecture for flag homology spheres
  (it strengthens an earlier conjecture by Charney and Davis \cite{CD95}). The
  second, proposed by Postnikov, Reiner and Williams \cite[Conjecture
  14.2]{PRW08}, is a natural extension of the first.

    \begin{conjecture} [Gal \cite{Ga05}]  \label{conj:gal}
      For every flag homology sphere $\Delta$ we have $\gamma (\Delta) \ge 0$.
    \end{conjecture}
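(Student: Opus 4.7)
The plan is to proceed by induction on the dimension $d$ of $\Delta$. The case $d \le 2$ amounts to the elementary observation that a flag $1$-sphere has at least four vertices, so the only nontrivial entry of $\gamma$ is $f_0 - 4 \ge 0$; the case $d = 3$ is the theorem of Davis and Okun mentioned in the abstract. The task is therefore to carry out the inductive step from dimension $d-1$ to dimension $d$.

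My first attempt would exploit the fact that the link $\link_\Delta F$ of any nonempty face of a flag homology sphere is itself a flag homology sphere of strictly smaller dimension, hence has nonnegative $\gamma$-vector by the inductive hypothesis. The hope would be to locate an identity of the form
\[
\gamma(\Delta, x) \ = \ \sum_{v \in \vset(\Delta)} \alpha_v(x) \cdot \gamma(\link_\Delta v, x) \ + \ R(\Delta, x),
\]
in which the coefficients $\alpha_v(x)$ have nonnegative coefficients and the residual term $R(\Delta, x)$ admits a combinatorial interpretation, via Stanley's local $h$-polynomials, that manifests its nonnegativity on flag inputs. Classical vertex-sum formulas already give such a decomposition at the level of the $h$-vector, and Stanley's theory of simplicial subdivisions and local $h$-vectors, which the paper invokes for its subdivision result, is the natural source for the residual $R$.

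The step I expect to be the hardest is precisely the construction of this decomposition in the $\gamma$-basis: the change of variables from $h$ to $\gamma$ is not obviously compatible with the link-sum recursion, and all existing proofs of Gal's conjecture in special cases (Coxeter complexes, barycentric subdivisions of regular CW-spheres, chordal nerve complexes, and the like) rely on structure beyond flagness to realize $\gamma$ as the Hilbert series of an explicit object. A fallback strategy would be to fix, in each dimension $d$, a base flag sphere $\Delta_0$ of known nonnegative $\gamma$-vector (for example the boundary of the $(d+1)$-dimensional cross-polytope, where $\gamma = (1, 0, \ldots, 0)$), and to connect an arbitrary flag $d$-sphere to $\Delta_0$ through a sequence of flag-preserving local moves each of which is shown not to decrease $\gamma$. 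The obstacle here is twofold: no flag analogue of Pachner connectivity of triangulations is presently available, and the $\gamma$-monotonicity under even the simplest flag moves is essentially the Postnikov--Reiner--Williams conjecture, whose partial resolution is the subject of the present paper. Thus any serious assault on Gal's conjecture will likely have to proceed in tandem with a resolution of that subdivision conjecture in full generality, which is why establishing the subdivision statement in low dimensions, as the paper sets out to do, is already a substantial step.
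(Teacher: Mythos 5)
The statement you were asked to address is Conjecture~\ref{conj:gal}, an open conjecture of Gal that the paper restates for context but does not prove; the paper only cites the known cases (trivial in dimension $\le 2$, Davis--Okun in dimension $3$, Gal's deduction from Davis--Okun in dimension $4$). Your proposal, commendably, does not pretend to prove it either: you outline two strategies and explain why each is currently blocked. There is therefore no proof in the paper to compare against, and no proof in your submission.

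That said, the approach you sketch is closely aligned with what the paper actually contributes toward this conjecture. You want to write $\gamma(\Delta, x)$ as a sum of manifestly nonnegative pieces coming from links plus a residual controlled by Stanley's local $h$-theory. The paper achieves a decomposition of exactly this flavor, but relative to a fixed base sphere rather than by a vertex-link recursion on $\Delta$ itself: Theorem~\ref{thm:main} shows that every flag $(d-1)$-dimensional homology sphere is a vertex-induced flag homology subdivision of $\Sigma_{d-1}$, and Corollary~\ref{cor:main12} then yields
\[
\gamma(\Delta, x) \ = \ \sum_{F \in \Sigma_{d-1}} \, \xi_F(\Gamma_F, x),
\]
a sum of local $\gamma$-polynomials of the restricted subdivisions $\Gamma_F$ of the simplices $2^F$. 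Conjecture~\ref{conj:main} --- nonnegativity of the local $\gamma$-vector for flag vertex-induced homology subdivisions of a simplex --- would make every summand nonnegative and hence would prove Gal's conjecture. This is precisely the ``residual $R$ admits a combinatorial interpretation manifesting its nonnegativity'' mechanism you hoped to locate, and it is a more structured residual than an unspecified $R(\Delta,x)$: everything is packaged into local $\gamma$-vectors. The paper proves Conjecture~\ref{conj:main} only for subdivisions of the $3$-dimensional simplex (Proposition~\ref{prop:main3d}), using Davis--Okun; combined with the vertex-link sum formula $2\gamma_2(\Delta) = \sum_{v} \gamma_2(\link_\Delta(v))$ that you also invoke, this recovers Gal's conjecture through dimension $4$ but no further. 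Your remarks about the absence of a flag Pachner-type connectivity theorem and the entanglement with the Postnikov--Reiner--Williams subdivision conjecture are accurate assessments of why the general case remains open.
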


    \begin{conjecture} [Postnikov, Reiner and Williams \cite{PRW08}]
    \label{conj:monotone}
      For all flag homology spheres $\Delta$ and $\Delta'$ for which $\Delta'$
      geometrically subdivides $\Delta$, we have $\gamma (\Delta') \ge \gamma
      (\Delta)$.
    \end{conjecture}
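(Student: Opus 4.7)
My proposal is to reduce Conjecture \ref{conj:monotone} to a combination of Conjecture \ref{conj:gal} for links in $\Delta$ and a new \emph{local} nonnegativity statement. The idea is to lift Stanley's decomposition of $h(\Delta')$ into local $h$-polynomials and $h$-polynomials of links of $\Delta$ up to the level of $\gamma$-polynomials. Throughout I would use that every link of a face in a flag homology sphere is again a flag homology sphere, and that the restriction $\Delta'_F$ of $\Delta'$ to any face $F$ of $\Delta$ is a flag subdivision of the closed simplex on $F$.

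First I would invoke Stanley's identity
$$h(\Delta', t) \;=\; \sum_{F \in \Delta} \ell_F(\Delta'_F, t) \, h(\link_\Delta F, t),$$
and separate out the term $F = \emptyset$, which contributes exactly $h(\Delta, t)$ since $\ell_\emptyset = 1$ and $\link_\Delta \emptyset = \Delta$. Because $\Delta$ and $\Delta'$ are flag homology spheres, both $h$-polynomials are palindromic and can be written as $\sum_k \gamma_k \, t^k (1+t)^{d - 2k}$. Each $h(\link_\Delta F, t)$ admits an analogous expansion of degree $d - |F|$, and, by Stanley's symmetry theorem for quasi-geometric subdivisions, each $\ell_F(\Delta'_F, t)$ is palindromic of center $|F|/2$, hence admits a decomposition $\sum_i \xi_{F,i}(\Delta'_F) \, t^i (1+t)^{|F| - 2i}$. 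Substituting these expansions and matching the basis $t^k (1+t)^{d - 2k}$ should yield the identity
$$\gamma_k(\Delta') - \gamma_k(\Delta) \;=\; \sum_{\emptyset \neq F \in \Delta} \; \sum_{i + j = k} \xi_{F,i}(\Delta'_F) \, \gamma_j(\link_\Delta F).$$

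From this identity the monotonicity $\gamma(\Delta') \geq \gamma(\Delta)$ reduces to two claims: (i) $\xi_{F,i}(\Delta'_F) \geq 0$ for every face $F$ and every $i$, whenever $\Delta'_F$ is a flag subdivision of the simplex on $F$; and (ii) $\gamma_j(\link_\Delta F) \geq 0$ for every face $F$ and every $j$, that is, Conjecture \ref{conj:gal} applied to each link. For (i), I would seek a combinatorial model for $\ell_F$ in the flag case, perhaps by identifying $\ell_F(\Delta'_F, t)$ with the $h$-polynomial of an auxiliary flag complex whose $\gamma$-vector is visibly nonnegative; an induction on $|F|$ should be available, since restrictions of flag subdivisions are again flag.

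The principal obstacle is (ii): Conjecture \ref{conj:gal} is itself open in general, so the plan produces at best a \emph{conditional} reduction of Conjecture \ref{conj:monotone}. Unconditional consequences are available only in the range where Gal's conjecture is known; since the Davis--Okun theorem secures $\gamma \geq 0$ for flag homology spheres of dimension at most $3$, and since every non-empty face $F$ of a flag homology sphere $\Delta$ of dimension at most $4$ has $\link_\Delta F$ of dimension at most $3$, the strategy would deliver the full conjecture unconditionally through $\dim \Delta = 4$. A secondary but still serious obstacle is (i): no general nonnegativity theory for a local $\gamma$-polynomial currently exists, so its establishment in the flag case will likely require genuinely new combinatorial input in addition to Stanley's existing framework for local $h$-vectors.
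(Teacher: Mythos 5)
Your plan is, in outline, exactly the paper's framework: the $\gamma$-level identity you want is Proposition \ref{prop:gammaformula}, your claim (i) is Conjecture \ref{conj:main} (nonnegativity of the local $\gamma$-vector), and the resulting reduction is Corollary \ref{cor:main2}. Note that the statement under discussion is itself a conjecture which the paper does not prove in full either; the paper's unconditional content is Theorem \ref{thm:34}. Two remarks on the structure of your reduction: your item (ii) need not be assumed separately, since in the paper Gal's conjecture for the links is itself a consequence of (i), via Theorem \ref{thm:main} (every flag homology sphere is a vertex-induced homology subdivision of $\Sigma_{d-1}$) and Corollary \ref{cor:main12}; and the flagness of the restrictions $\Delta'_F$, which you use throughout, genuinely needs the vertex-induced (in particular geometric) hypothesis --- for merely quasi-geometric flag $\Delta'$ it can fail (Example \ref{ex:nonflagrest}(c)). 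Also, symmetry of $\ell_F$ holds for all homology subdivisions, not just quasi-geometric ones (Theorem \ref{thm:stalocal}(ii)).

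The genuine gap is your claim that the strategy ``would deliver the full conjecture unconditionally through $\dim \Delta = 4$.'' The dimension count you make (links of nonempty faces of a $4$-dimensional sphere are at most $3$-dimensional, so Davis--Okun covers them) only disposes of your item (ii). Your decomposition also requires item (i) for \emph{every} nonempty face $F$, including the facets; when $\dim\Delta = 4$ these are $4$-dimensional simplices, and nonnegativity of $\xi_F(\Delta'_F)$ for flag vertex-induced subdivisions of the $4$-simplex is open --- the paper establishes (i) only for the $3$-simplex (Proposition \ref{prop:main3d}), and even that case is not a direct application of Davis--Okun: the coefficient $\xi_2$ of a subdivided simplex is not a priori a $\gamma$-coefficient of any flag sphere, so one must first complete the subdivided simplex to a flag vertex-induced subdivision of $\Sigma_{d-1}$ (Proposition \ref{prop:balltosphere}) and then isolate $\xi_2$ inside $\gamma_2$ of that sphere. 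Your plan contains neither this completion construction nor any substitute for it, and you explicitly leave (i) open, so as written it proves nothing unconditional beyond dimension $3$. For dimension $4$ the paper bypasses the missing case of (i) by a different device: Gal's identity $2\gamma_2(\Delta) = \sum_{v} \gamma_2(\link_\Delta(v))$, monotonicity in dimension $3$ applied to the vertex links (which are again flag vertex-induced subdivisions by Lemma \ref{lem:joinlink}), and Davis--Okun for the links at the new vertices of $\Delta'$. That step, or something equivalent, is what your proposal is missing for $\dim\Delta = 4$.
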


  The previous statements are trivial for spheres of dimension two or less.
  Conjecture \ref{conj:gal} was proved for 3-dimensional spheres by Davis and
  Okun \cite[Theorem 11.2.1]{DOk01} and was deduced from that result for
  4-dimensional spheres in \cite[Corollary 2.2.3]{Ga05}. Conjecture
  \ref{conj:monotone} can be thought of as a conjectural analogue of the fact
  \cite[Theorem 4.10]{Sta92} that the $h$-vector (a certain linear transformation
  of the face vector) of a Cohen-Macaulay simplicial complex increases under
  quasi-geometric simplicial subdivision (a class of topological subdivisions
  which includes all geometric simplicial subdivisions). The main result of
  this paper proves its validity in three and four dimensions for a new class
  of simplicial subdivisions, which includes all geometric ones.

    \begin{theorem} \label{thm:34}
      For every flag homology sphere $\Delta$ of dimension 3 or 4 and for every
      flag vertex-induced homology subdivision $\Delta'$ of $\Delta$, we have
      $\gamma (\Delta') \ge \gamma (\Delta)$.
    \end{theorem}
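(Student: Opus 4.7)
The strategy I would adopt is to produce a $\gamma$-vector analogue of Stanley's decomposition of the $h$-vector of a simplicial subdivision into local contributions, and then reduce the theorem to two nonnegativity statements, one of which is supplied by the Davis--Okun theorem.

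For a vertex-induced homology subdivision $\Gamma$ of a simplex $2^V$, Stanley's local $h$-polynomial $\ell_V(\Gamma, x)$ is palindromic of degree $|V|$ and hence admits a unique expansion
\[
\ell_V(\Gamma, x) \;=\; \sum_{i \ge 0} \xi_V^{(i)}(\Gamma)\, x^i\, (1+x)^{|V| - 2i}.
\]
Call $\xi_V(\Gamma, x) = \sum_i \xi_V^{(i)}(\Gamma)\, x^i$ the \emph{local $\gamma$-polynomial} of $\Gamma$. Substituting this expansion and the $\gamma$-expansion of each $h(\link_\Delta F, x)$ into Stanley's formula $h(\Delta', x) = \sum_{F \in \Delta} \ell_F(\Delta'_F, x)\, h(\link_\Delta F, x)$, separating the $F = \emptyset$ summand (which contributes exactly $h(\Delta, x)$) and using uniqueness of the $\gamma$-expansion of a palindromic polynomial of degree $\dim \Delta + 1$, one arrives at the key identity
\[
\gamma(\Delta', x) \,-\, \gamma(\Delta, x) \;=\; \sum_{\emptyset \ne F \in \Delta} \xi_F(\Delta'_F, x) \cdot \gamma(\link_\Delta F, x).
\]

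The theorem then reduces to showing that each summand on the right has nonnegative coefficients, which splits into two pieces: (i) $\gamma(\link_\Delta F) \ge 0$ and (ii) $\xi_F(\Delta'_F) \ge 0$ for every nonempty face $F \in \Delta$. Part (i) is immediate in our range: when $\dim \Delta \le 4$ and $|F| \ge 1$, the link $\link_\Delta F$ is a flag homology sphere of dimension at most $3$, so its $\gamma$-vector is nonnegative by the Davis--Okun theorem \cite{DOk01}.

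The main obstacle is part (ii), which amounts to a local analog of Gal's conjecture: the local $\gamma$-polynomial of any flag vertex-induced homology subdivision of a simplex of dimension at most $4$ has nonnegative coefficients. The flagness of $\Delta'$, inherited by each restriction $\Delta'_F$, is essential here — without it the claim simply fails, as in Gal's conjecture. The cases $|F| \le 3$ should be handled by direct manipulation of the low-degree polynomial $\ell_F$; the substantive cases are $|F| = 4, 5$. Here I would attempt to imitate the Davis--Okun argument inside the subdivided simplex, expressing each $\xi_F^{(i)}$ as a combination of $\gamma$-type invariants of links of interior vertices of $\Delta'_F$, which are flag homology balls or spheres of strictly smaller dimension, and then invoking Davis--Okun on those smaller links. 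The delicate point, where I expect the real technical work of the paper to lie, is to arrange this local inclusion-exclusion so as to produce a genuinely nonnegative combination of $\gamma$'s of low-dimensional flag spheres, rather than a signed sum that merely evaluates nonnegatively.
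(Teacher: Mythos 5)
Your framework — defining local $\gamma$-polynomials, deriving
$\gamma(\Delta',x) - \gamma(\Delta,x) = \sum_{\varnothing \neq F \in \Delta}\xi_F(\Delta'_F,x)\,\gamma(\link_\Delta(F),x)$
from Stanley's decomposition, and reducing to the two nonnegativity claims — is exactly Proposition \ref{prop:gammaformula} and Corollary \ref{cor:main2} of the paper, and it is the correct engine. Your observation that $\gamma(\link_\Delta F) \ge 0$ for nonempty $F$ follows from Davis--Okun when $\dim\Delta \le 4$ is also correct. The problems are in part (ii).

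First, for the case $|F|=4$ (subdivisions of the $3$-simplex), the paper does not ``imitate the Davis--Okun argument inside the subdivided simplex.'' Instead it uses a structural trick (Proposition \ref{prop:balltosphere}): the flag vertex-induced homology subdivision $\Gamma$ of $2^V$ is glued to a simplex $2^U$ on a disjoint $4$-element set to produce a flag homology \emph{sphere} $\Delta$ which is a flag vertex-induced homology subdivision of $\Sigma_3$. Applying the $\gamma$-decomposition to $\Delta \to \Sigma_3$ and observing that every restriction not landing inside $V$ is a cone (hence has zero local $h$-vector), one gets $\gamma(\Delta,x) = \sum_{F\subseteq V}\xi_F(\Gamma_F,x)$, from which the top coefficient $\xi_2$ of $\xi_V(\Gamma,x)$ is literally $\gamma_2(\Delta)$. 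Davis--Okun is then invoked once, on the constructed sphere $\Delta$, not on links inside $\Gamma$. Without some such device you have no way to apply Davis--Okun, which is a statement about flag homology spheres, to what is a priori a homology ball.

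Second, and more seriously, your plan for $\dim\Delta = 4$ requires nonnegativity of $\xi_F(\Delta'_F)$ for $|F|=5$, i.e.\ Conjecture \ref{conj:main} for flag vertex-induced subdivisions of the $4$-simplex. The paper does \emph{not} prove this; Proposition \ref{prop:main3d} stops at the $3$-simplex, and the $4$-dimensional local statement remains open. The paper evades this by a genuinely different argument in dimension $4$: since $\gamma_1$ is just $f_0 - 8$, monotonicity of $\gamma_1$ is trivial; and for $\gamma_2$, Gal's identity $2\gamma_2(\Delta) = \sum_{v}\gamma_2(\link_\Delta(v))$ (and likewise for $\Delta'$) reduces the comparison to vertex links, which are $3$-dimensional. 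Monotonicity for old vertices follows from the already-proved $3$-dimensional case (using that the link of a subdivision is a subdivision of the link, Lemma \ref{lem:joinlink}), and nonnegativity for new vertices is Davis--Okun. You should view the $4$-dimensional case not as ``one more instance of the local decomposition'' but as a separate reduction that deliberately sidesteps the unavailable $\xi \ge 0$ in one higher dimension.
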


  The previous result naturally suggests the following stronger version of
  Conjecture \ref{conj:monotone}.

    \begin{conjecture} \label{conj:newmonotone}
      For every flag homology sphere $\Delta$ and every flag vertex-induced
      homology subdivision $\Delta'$ of $\Delta$, we have $\gamma (\Delta')
      \ge \gamma (\Delta)$.
    \end{conjecture}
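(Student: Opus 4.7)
The plan is to mimic the strategy that (presumably) proves Theorem~\ref{thm:34}, pushing it to arbitrary dimension by replacing each ad hoc low-dimensional input with a conjectural general one. The central object I would introduce is a \emph{local $\gamma$-polynomial} $\xi_F(\Delta',\Delta)$ attached to every face $F \in \Delta$ and every flag vertex-induced homology subdivision $\Delta'$ of $\Delta$, designed so that a decomposition
\[
\gamma(\Delta') - \gamma(\Delta) \ = \ \sum_{F \in \Delta} \xi_F(\Delta',\Delta) \cdot \gamma(\link_\Delta(F))
\]
holds (or, more realistically, a variant in which $\xi_F$ encodes the contribution of the restricted subdivision of the star of $F$ and the sum runs over interior faces). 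This would be the $\gamma$-analogue of Stanley's local $h$-vector formula for Cohen-Macaulay subdivisions, and it reduces the monotonicity question to the \emph{pointwise} nonnegativity of each summand.

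To set this up I would first transcribe Stanley's identity $h(\Delta') - h(\Delta) = \sum_F \ell_F(\Delta',\Delta) \, h(\link_\Delta(F))$ into $\gamma$-coordinates, using the standard change of basis between $h$ and $\gamma$ for Eulerian complexes and the fact that both $\Delta$ and $\Delta'$ are flag homology spheres. This produces a natural candidate $\xi_F$, essentially the $\gamma$-part of $\ell_F$. I would then verify three properties by induction on dimension: (i) $\xi_F$ depends only on the restricted subdivision of the simplex $F$, so it is truly local; (ii) when the ambient subdivision is flag and vertex-induced, the restriction to each star is again flag and vertex-induced, so the induction hypothesis applies to links; and (iii) $\xi_F \ge 0$ whenever the subdivision is flag. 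Combined with Gal's conjecture (applied to the links, which have smaller dimension), this would yield the desired inequality.

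The main obstacle is twofold, and mirrors exactly the two inputs that restrict Theorem~\ref{thm:34} to dimensions $3$ and $4$. First, the factor $\gamma(\link_\Delta(F))$ on the right must be nonnegative, so any such argument presupposes Gal's Conjecture~\ref{conj:gal} in all dimensions; in the absence of a general proof of Gal's conjecture one can at best hope for a conditional statement, or for a simultaneous induction establishing both conjectures together. Second, one must show that the local $\gamma$-polynomial $\xi_F$ of a flag vertex-induced homology subdivision of a simplex is itself nonnegative; this is a genuinely new flag phenomenon, strictly stronger than Stanley's known nonnegativity of the local $h$-vector for quasi-geometric subdivisions, and I expect it to be the hardest part. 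A plausible route is to try to exhibit $\xi_F$ as the $f$-vector of some auxiliary flag complex built from the no-missing-face structure of $\Delta'$, in the spirit of Gal's and Nevo-Petersen's conjectural refinements, so that nonnegativity becomes tautological; however, even formulating the right auxiliary complex in arbitrary dimension appears to be open.
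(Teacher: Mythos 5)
Note first that the statement in question is Conjecture~\ref{conj:newmonotone}, which the paper does \emph{not} prove. The paper only proves it in dimensions $3$ and $4$ (Theorem~\ref{thm:34}) and reduces the general case to Conjecture~\ref{conj:main}, which remains open. Your proposal is likewise a plan rather than a proof, and you correctly say as much; so there is no contradiction, but it is worth spelling out precisely how your plan relates to the paper's framework.

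Your strategy coincides, in essence, with the one the paper actually implements. The ``local $\gamma$-polynomial'' you propose is exactly Definition~\ref{def:localgamma}: $\xi_V(\Gamma,x)$ is the $\gamma$-polynomial associated to Stanley's local $h$-polynomial $\ell_V(\Gamma,x)$, and because $\ell_V(\Gamma,x)$ depends only on the restriction $\Gamma=\Delta'_F$ to the face $F$, the quantity is genuinely local, just as you predict in your item (i). Transcribing Stanley's formula~(\ref{eq:hformula}) into $\gamma$-coordinates is exactly Proposition~\ref{prop:gammaformula}, and the monotonicity consequence you want is Corollary~\ref{cor:main2}. Your item (ii), that restrictions and links of flag vertex-induced subdivisions are again flag and vertex-induced, is Lemma~\ref{lem:joinlink} and the discussion preceding it. Your item (iii), nonnegativity of the local $\gamma$-vector for flag vertex-induced subdivisions, is precisely Conjecture~\ref{conj:main}.

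Two corrections. First, your displayed decomposition for $\gamma(\Delta')-\gamma(\Delta)$ is slightly off: the correct statement, obtained by separating the $F=\varnothing$ term of~(\ref{eq:gammaformula}), is
\[
\gamma(\Delta',x) - \gamma(\Delta,x) \; = \; \sum_{\varnothing \neq F \in \Delta} \xi_F(\Delta'_F,x)\,\gamma(\link_\Delta(F),x),
\]
where the sum runs over all \emph{nonempty} faces of $\Delta$, not over interior faces (every face of a sphere is interior) and not over all faces. Second, and more substantively, you list two independent open inputs, Gal's Conjecture~\ref{conj:gal} for the links and nonnegativity of $\xi_F$, and you mention a ``simultaneous induction'' only as a speculative possibility. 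The paper in fact short-circuits this: Theorem~\ref{thm:main} shows every flag homology sphere of dimension $d-1$ is itself a flag vertex-induced homology subdivision of $\Sigma_{d-1}$, and Corollary~\ref{cor:main12} derives from this that Conjecture~\ref{conj:main} implies Conjecture~\ref{conj:gal}. Thus only \emph{one} conjectural input is needed, namely Conjecture~\ref{conj:main}, and the nonnegativity of $\gamma(\link_\Delta(F))$ for proper faces $F$ follows from the same conjecture in strictly smaller dimension, without a separate appeal to Gal. This is the structural insight your plan misses, and it is what makes the reduction clean. Beyond that, the genuine gap in your proposal (as you acknowledge) is that no proof of nonnegativity of $\xi_F$ for flag vertex-induced subdivisions exists beyond dimension $3$ (Proposition~\ref{prop:main3d}); the paper offers Proposition~\ref{prop:edgesub} (edge subdivisions) and the join formula~(\ref{eq:xiproduct}) as partial evidence, but the general statement remains open.
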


  The following structural result on flag homology spheres, which may be of 
  independent interest, will also be proved in Section \ref{sec:PL}. It implies, 
  for instance, that Conjecture \ref{conj:newmonotone} is stronger than 
  Conjecture \ref{conj:gal}. Throughout this paper, we will denote by 
  $\Sigma_{d-1}$ the boundary complex of the $d$-dimensional cross-polytope 
  (equivalently, the simplicial join of $d$ copies of the zero-dimensional 
  sphere). 

    \begin{theorem} \label{thm:main}
      Every flag $(d-1)$-dimensional homology sphere is a vertex-induced (hence
      quasi-geometric and flag) homology subdivision of $\Sigma_{d-1}$.
    \end{theorem}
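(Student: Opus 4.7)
My plan is to define an explicit carrier map $\sigma:V(\Delta)\to 2^{V(\Sigma_{d-1})}$ that witnesses $\Delta$ as a vertex-induced homology subdivision of $\Sigma_{d-1}$, and then verify the defining properties by induction on $d$. The construction begins with the identification of $2d$ distinguished vertices $x_1,y_1,\ldots,x_d,y_d$ of $\Delta$, arranged as $d$ pairwise disjoint non-edges $\{x_i,y_i\}\notin\Delta$; these will serve as preimages of the $2d$ vertices of $\Sigma_{d-1}$. Existence of such a configuration follows from the lower bound $f_0(\Delta)\geq 2d$ for flag $(d-1)$-dimensional homology spheres together with the fact that $\Delta$ is not a simplex boundary (for $d\geq 2$), and should be provable by induction using that the link of any vertex in $\Delta$ is itself a flag homology sphere of one dimension less.

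Having fixed such a configuration, I set $\sigma(x_i)=\{x_i\}$ and $\sigma(y_i)=\{y_i\}$, and for every other vertex $v\in V(\Delta)$ I define $\sigma(v)$ as a specific face of $\Sigma_{d-1}$ whose vertex set is determined by the adjacency pattern of $v$ to the $2d$ distinguished vertices, refined by a path-connectivity criterion inside $\Delta$ so that the resulting assignment extends to a carrier respecting the face structure. The crucial well-definedness requirement is that $\bigcup_{v\in F}\sigma(v)$ be a face of $\Sigma_{d-1}$ for every $F\in\Delta$, which reduces to showing that $\sigma(v)$ never contains an antipodal pair and that adjacent vertices of $\Delta$ have compatible carriers; the flag hypothesis on $\Delta$ is essential here, as it propagates non-adjacency conditions from edges to higher faces.

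The main step is verifying that each preimage $\Delta_\tau$, namely the full subcomplex of $\Delta$ on $\{v:\sigma(v)\subseteq\tau\}$, is a homology $(\dim\tau)$-ball with boundary $\Delta_{\partial\tau}$. For $\tau$ a vertex this is immediate from the construction; for higher-dimensional $\tau$, I envisage an inductive argument combining flagness of $\Delta$, the standard fact that closed stars and antistars in a homology sphere are homology balls, and Mayer--Vietoris to assemble local information globally. The principal obstacle is precisely this topological verification, since there is no a priori reason for an induced subcomplex of a flag sphere on a vertex subset defined by adjacency to be a ball; the argument therefore relies on a delicate interplay between the flag hypothesis, the cross-polytope combinatorics of $\Sigma_{d-1}$, and the homology sphere property of $\Delta$. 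The vertex-induced condition is automatic from the construction, as $\sigma$ is defined on vertices and extended to faces by taking unions.
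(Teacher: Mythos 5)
Your proposal has the right target (an explicit carrier map $\sigma$ defined vertex-by-vertex, with the vertex-induced property coming for free from taking unions) but it diverges from the paper's construction in a way that introduces serious, unfilled gaps.

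The paper does not begin with $2d$ distinguished vertices arranged into $d$ disjoint non-edges. It begins with something much more constrained and much easier to find: a single \emph{facet} $\{x_1,\dots,x_d\}$ of $\Delta$. The carrier map is then the completely explicit
\[
\sigma(E) \;=\; \{u_i : x_i \in E\} \,\cup\, \{v_j : E \notin \overline{\st}_\Delta(x_j)\},
\]
which is a union over vertices and hence automatically vertex-induced. The ``antipodal'' vertices you call $y_i$ are not chosen in advance; they emerge from the construction as the unique vertex other than $x_i$ in $\link_\Delta(\{x_j : j\ne i\})$ (a $0$-sphere since $\{x_1,\dots,x_d\}$ is a facet). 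If instead you start from an \emph{arbitrary} collection of $d$ pairwise disjoint non-edges with no facet compatibility, the construction will typically fail: in a bipyramid over a hexagon, for example, the $3$ disjoint non-edges $\{a,b\}$, $\{c_1,c_3\}$, $\{c_4,c_6\}$ do not align with any facet, and no carrier assignment based on adjacency to those six vertices will produce a subdivision of $\Sigma_2$. So the existence claim ``$f_0(\Delta)\ge 2d$ plus $\Delta$ not a simplex boundary gives such a configuration'' is not only unproved, it is aiming at the wrong object.

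The second gap is the one you yourself flag as ``the principal obstacle'': showing that $\sigma^{-1}(2^F)$ is a homology ball of the right dimension. Your proposal leaves this as an unspecified Mayer--Vietoris argument, but this is precisely where a concrete lemma is needed. The paper isolates exactly the statement required (its Lemma~\ref{lem:starunion}): in a flag homology $(d-1)$-sphere, the union $\bigcup_{v\in F}\overline{\st}_\Delta(v)$ over any nonempty face $F$ is a homology $(d-1)$-ball with interior $\bigcup_{v\in F}\st_\Delta(v)$; flagness is essential (it prevents a $(d-2)$-face from lying in two different $\link_\Delta(v_i)$, which is what keeps the boundary well-behaved). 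Once that lemma is in hand, $\sigma^{-1}(2^F)$ is identified as a join of a simplex, a link, and a complement-of-star-union, all of which are handled by standard facts (parts (i), (iii), (v) of Lemma~\ref{lem:homology}). Without a statement of this strength, the verification step is not merely unfinished but lacks the structural handle that makes it tractable. Your ``path-connectivity refinement'' of the carrier map is also never pinned down, and as written it is not clear it defines a simplicial map $\Delta\to\Sigma_{d-1}$ at all, let alone a homology subdivision.

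In summary: the high-level idea (explicit vertex carrier, vertex-induced for free) matches the paper, but the choice of initial data is wrong, the carrier formula is missing, and the key topological lemma about unions of closed stars in flag homology spheres is absent. These are not cosmetic differences; each of them is load-bearing, and the proposal as stated does not constitute a proof.
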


  The proof of Theorem \ref{thm:34} relies on the theory of face enumeration
  for simplicial subdivisions, developed by Stanley \cite{Sta92}. Given a
  simplicial complex $\Delta$ and a simplicial subdivision $\Delta'$ of $\Delta$,
  the $h$-vector of $\Delta'$ can be expressed in terms of local contributions,
  one for each face of $\Delta$, and the combinatorics of $\Delta$ \cite[Theorem
  3.2]{Sta92}. The local contributions are expressed in terms of the key concept
  of a local $h$-vector, introduced and studied in \cite{Sta92}. When $\Delta$ is
  Eulerian, this formula transforms to one involving $\gamma$-vectors (Proposition
  \ref{prop:gammaformula}) and leads to the concept of a local $\gamma$-vector,
  introduced in Section \ref{sec:gamma}. Using the Davis-Okun theorem
  \cite{DOk01}, mentioned earlier, it is shown that the local $\gamma$-vector
  has nonnegative coefficients for every flag vertex-induced homology subdivision
  of the 3-dimensional simplex. Theorem \ref{thm:34} is deduced from these
  results in Section \ref{sec:gamma}.

  The proof of Theorem \ref{thm:main} is motivated by that of \cite[Theorem 
  1.2]{Ath11}, stating that the graph of any flag simplicial pseudomanifold of 
  dimension $d-1$ contains a subdivision of the graph of $\Sigma_{d-1}$. 

  We now briefly describe the content and structure of this paper.
  Sections \ref{sec:sub} and \ref{sec:enu} provide the necessary background on
  simplicial complexes, subdivisions and their face enumeration. The notion
  of a homology subdivision, which is convenient for the results of this
  paper, as well as those of a flag subdivision and vertex-induced (a natural
  strengthening of quasi-geometric) subdivision, are introduced in Section
  \ref{subsec:sub}. Section \ref{sec:monolocal} includes a simple example (see
  Example \ref{ex:counter}) which shows that there exist quasi-geometric
  subdivisions of the simplex with non-unimodal local $h$-vector.

  Section \ref{sec:PL} proves Theorem \ref{thm:main} and another structural 
  result on flag subdivisions (Proposition \ref{prop:balltosphere}), stating 
  that every flag vertex-induced homology subdivision of the $(d-1)$-dimensional 
  simplex naturally occurs as a restriction of a flag vertex-induced homology 
  subdivision of $\Sigma_{d-1}$. These results are used in Section \ref{sec:gamma}. 

  Local $\gamma$-vectors are introduced in Section \ref{sec:gamma}, where
  examples and elementary properties are discussed. It is conjectured there that
  the local $\gamma$-vector has nonnegative coordinates for every flag
  vertex-induced homology subdivision of the simplex (Conjecture \ref{conj:main}).
  This statement can be considered as a local analogue of Conjecture
  \ref{conj:gal}. It is shown to imply both Conjectures \ref{conj:gal} and
  \ref{conj:newmonotone} and to hold in dimension three. Section \ref{sec:gamma}
  concludes with the proof of Theorem \ref{thm:34}.

  Section \ref{sec:evidence} discusses some special cases of Conjecture
  \ref{conj:main}. For instance, the conjecture is shown to hold for iterated
  edge subdivisions (in the sense of \cite[Section 5.3]{CD95}) of the simplex.

  \section{Flag complexes, subdivisions and $\gamma$-vectors}
  \label{sec:sub}

  This section reviews background material on simplicial complexes, in particular
  on their homological properties and subdivisions. For more information on these
  topics, the reader is referred to \cite{StaCCA}. Throughout this paper, $\kk$
  will be a field which we will assume to be fixed. We will denote by $|S|$ the
  cardinality, and by $2^S$ the set of all subsets, of a finite set $S$.

  \subsection{Simplicial complexes} \label{subsec:complexes}
  All simplicial complexes we consider will be abstract and finite. Thus, given a
  finite set $\Omega$, a \emph{simplicial complex} on the ground set $\Omega$ is a
  collection $\Delta$ of subsets of $\Omega$ such that $F \subseteq G \in \Delta$
  implies $F \in \Delta$. The elements of $\Delta$ are called \emph{faces}. The
  dimension of a face $F$ is defined as one less than the cardinality of $F$. The
  dimension of $\Delta$ is the maximum dimension of a face and is denoted by $\dim
  (\Delta)$. Faces of $\Delta$ of dimension zero or one are called \emph{vertices}
  or \emph{edges}, respectively. A \emph{facet} of $\Delta$ is a face which is
  maximal with respect to inclusion. All topological properties of $\Delta$ we
  mention in the sequel will refer to those of the geometric realization $\|\Delta\|$
  of $\Delta$ \cite[Section 9]{Bj95}, uniquely defined up to homeomorphism. For
  example, we say that $\Delta$ is a simplicial (topological) ball (respectively,
  sphere) if $\|\Delta\|$ is homeomorphic to a ball (respectively, sphere).

  The \emph{open star} of a face $F \in \Delta$, denoted $\st_\Delta (F)$, is
  the collection of all faces of $\Delta$ which contain $F$. The \emph{closed star}
  of $F \in \Delta$, denoted $\overline{\st}_\Delta (F)$, is the subcomplex of
  $\Delta$ consisting of all subsets of the elements of $\st_\Delta (F)$. The
  \emph{link} of the face $F \in \Delta$ is the subcomplex of $\Delta$ defined
  as $\link_\Delta (F) = \{ G \sm F: G \in \Delta, \, F \subseteq G\}$. The
  \emph{simplicial join} $\Delta_1 \ast \Delta_2$ of two collections $\Delta_1$
  and $\Delta_2$ of subsets of disjoint ground sets is the collection whose elements
  are the sets of the form $F_1 \cup F_2$, where $F_1 \in \Delta_1$ and $F_2 \in
  \Delta_2$. If $\Delta_1$ and $\Delta_2$ are simplicial complexes, then so is
  $\Delta_1 \ast \Delta_2$. The simplicial join of $\Delta$ with the
  zero-dimensional complex $\{ \varnothing, \{v\} \}$ is denoted by $v \ast
  \Delta$ and called the \emph{cone over $\Delta$} on the (new) vertex $v$.

  A simplicial complex $\Delta$ is called \emph{flag} if every minimal non-face of
  $\Delta$ has two elements. The closed star and the link of any face of
  a flag complex, and the simplicial join of two (or more) flag complexes, are also
  flag complexes. In particular, the simplicial join of $d$ copies of the
  zero-dimensional complex with two vertices is a flag complex (in fact, a flag
  triangulation of the $(d-1)$-dimensional sphere), which will be denoted by
  $\Sigma_{d-1}$. Explicitly, $\Sigma_{d-1}$ can be described as the simplicial
  complex on the $2d$-element ground set $\Omega_d = \{u_1, u_2,\dots,u_d\} \cup
  \{v_1, v_2,\dots,v_d\}$ whose faces are those subsets of $\Omega_d$ which contain
  at most one element from each of the sets $\{u_i, v_i\}$ for $1 \le i \le d$.

  \subsection{Homology balls and spheres} \label{subsec:homPL}
  Let $\Delta$ be a simplicial complex of dimension $d-1$. We call $\Delta$ a
  \emph{homology sphere} (over $\kk$) if for every $F \in \Delta$ (including $F =
  \varnothing$) we have
    $$ \widetilde{H}_i \, (\link_\Delta (F), \kk) \ = \ \begin{cases}
       \kk, & \text{if $i = \dim \, \link_\Delta (F)$ } \\
       0, & \text{otherwise,} \end{cases} $$
  where $\widetilde{H}_* (\Gamma, \kk)$ denotes reduced simplicial homology of
  $\Gamma$ with coefficients in $\kk$. We call $\Delta$ a \emph{homology ball} (over
  $\kk$) if there exists a subcomplex $\partial \Delta$ of $\Delta$, called the
  \emph{boundary} of $\Delta$, so that the following hold:
    \begin{itemize}
      \item[$\bullet$] $\partial \Delta$ is a $(d-2)$-dimensional homology sphere
                       over $\kk$,
      \item[$\bullet$] for every $F \in \Delta$ (including $F = \varnothing$) we
                       have
        $$ \widetilde{H}_i \, (\link_\Delta (F), \kk) \ = \ \begin{cases}
           \kk, & \text{if $F \notin \partial \Delta$ and $i = \dim \,
           \link_\Delta (F)$ } \\
           0, & \text{otherwise.} \end{cases} $$
    \end{itemize}
  The \emph{interior} of $\Delta$ is defined as $\inte(\Delta) = \Delta$, if
  $\Delta$ is a homology sphere, and $\inte (\Delta) = \Delta \sm \partial \Delta$, if
  $\Delta$ is a homology ball. For example, the simplicial complex $\{ \varnothing,
  \{v\} \}$ with a unique vertex $v$ is a zero-dimensional homology ball (over any
  field) with boundary $\{ \varnothing \}$ and interior $\{ \{v\} \}$. If $\Delta$ is
  a homology ball of dimension $d-1$, then $\partial \Delta$ consists exactly of the
  faces of those $(d-2)$-dimensional faces of $\Delta$ which are contained in a unique
  facet of $\Delta$.

  \begin{remark} \label{rem:homjoin}
  It follows from standard facts \cite[(9.12)]{Bj95} on the homology
  of simplicial joins that the simplicial join of a homology sphere
  and a homology ball, or of two homology balls, is a homology ball
  and that the simplicial join of two homology spheres is again a homology
  sphere. Moreover, in each case the interior of the simplicial join
  is equal to the simplicial join of the interiors of the two
  complexes in question. \qed
  \end{remark}

\subsection{Subdivisions} \label{subsec:sub}
We will adopt the following notion of homology subdivision of an abstract
simplicial complex. This notion generalizes that of topological subdivision of
\cite[Section 2]{Sta92}. We should point out that the class of homology
subdivisions of simplicial complexes is contained in the much broader class of
formal subdivisions of Eulerian posets, introduced and studied in \cite[Section
7]{Sta92}.

\begin{definition} \label{def:sub}
Let $\Delta$ be a simplicial complex. A (finite, simplicial) \emph{homology
subdivision} of $\Delta$ (over $\kk$) is a simplicial complex $\Delta'$ together
with a map $\sigma: \Delta' \to \Delta$ such that the following hold for every
$F \in \Delta$: (a) the set $\Delta'_F := \sigma^{-1} (2^F)$ is a subcomplex of
$\Delta'$ which is a homology ball (over $\kk$) of dimension $\dim(F)$; and
(b) $\sigma^{-1} (F)$ consists of the interior faces of $\Delta'_F$.

Such a map $\sigma$ is said to be a \emph{topological} subdivision if the complex
$\Delta'_F$ is homeomorphic to a ball of dimension $\dim(F)$ for every $F \in
\Delta$.
\end{definition}

  Let $\sigma: \Delta' \to \Delta$ be a homology subdivision of $\Delta$. From the
  defining properties, it follows that the map $\sigma$ is surjective and that
  $\dim (\sigma(E)) \ge \dim(E)$ for every $E \in \Delta'$. Given faces $E \in
  \Delta'$ and $F \in \Delta$, the face $\sigma(E)$ of $\Delta$ is called the
  \emph{carrier} of $E$; the subcomplex $\Delta'_F$ is called the \emph{restriction}
  of $\Delta'$ to $F$. The subdivision $\sigma$ is called \emph{quasi-geometric}
  \cite[Definition 4.1 (a)]{Sta92} if there do not exist $E \in \Delta'$ and face
  $F \in \Delta$ of dimension smaller than $\dim(E)$, such that the carrier of
  every vertex of $E$ is contained in $F$. Moreover, $\sigma$ is called
  \emph{geometric} \cite[Definition 4.1 (b)]{Sta92} if there exists a geometric
  realization of $\Delta'$ which geometrically subdivides a geometric realization
  of $\Delta$, in the way prescribed by $\sigma$.

  Clearly, if $\sigma: \Delta' \to \Delta$ is a homology (respectively, topological)
  subdivision, then the restriction of $\sigma$ to $\Delta'_F$ is also a homology
  (respectively, topological) subdivision of the simplex $2^F$ for every $F \in
  \Delta$. Moreover, if $\sigma$ is quasi-geometric (respectively, geometric), then
  so are all its restrictions $\Delta'_F$ for $F \in \Delta$. As part (c) of the
  following example shows, the restriction of $\sigma$ to a face $F \in \Delta$
  need not be a flag complex, even when $\Delta'$ and $\Delta$ are flag complexes
  and $\sigma$ is quasi-geometric.

  \begin{figure}
  \epsfysize = 2.2 in \centerline{\epsffile{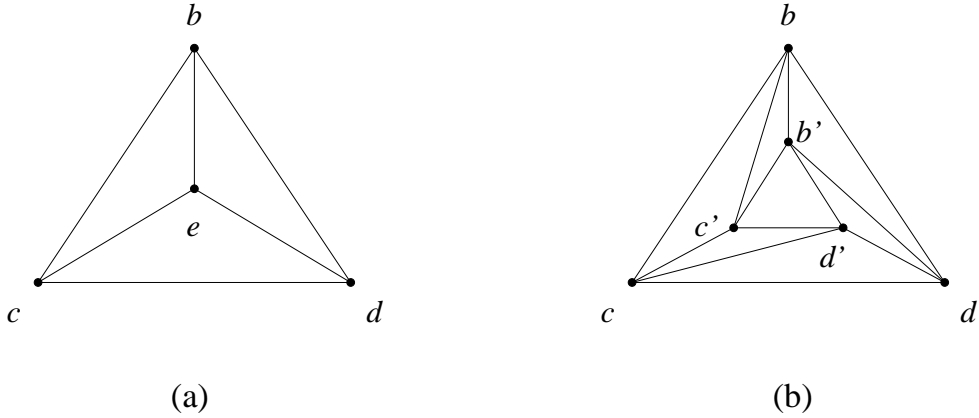}}
  \caption{Two non-flag subdivisions of a triangle.}
  \label{fig:nonregtri}
  \end{figure}

    \begin{example} \label{ex:nonflagrest} Consider a 3-dimensional simplex $2^V$,
    with $V = \{a, b, c, d\}$, and set $F = \{b, c, d\}$.

      (a) Let $\Gamma$ be the simplicial complex consisting of the subsets of $V$
      and the subsets of $\{b, c, d, e\}$ and let $\sigma: \Gamma \to 2^V$ be the 
      subdivision (considered in part (h) of \cite[Example 2.3]{Sta92}) which pushes 
      $\Gamma$ into the simplex $2^V$, so that the face $F$ of $\Gamma$ ends up in 
      the interior of $2^V$ and $e$ ends up in the interior of $2^F$. Formally, for 
      $E \in \Gamma$ we let $\sigma(E) = E$, if $E \in 2^V \sm \{F\}$, we let 
      $\sigma(E) = V$, if $E$ contains $F$ and otherwise we let $\sigma(E) = F$. 
      Then $\Gamma$ is a flag complex and the restriction $\Gamma_F$ of $\sigma$ is 
      the cone over the boundary of $2^F$ (with new vertex $e$), shown in Figure 
      \ref{fig:nonregtri} (a), which is not flag.

      (b) Let $\Gamma'$ be the simplicial complex consisting of the faces of the
      simplex $2^V$ and those of the cone, on a vertex $v$, over the boundary of
      the simplex with vertex set $\{b, c, d, e\}$ (note that $\Gamma'$ is not flag).
      Consider the subdivision $\sigma': \Gamma' \to 2^V$ which satisfies $\sigma'
      (E) = V$ for every face $E \in \Gamma'$ containing $v$ and otherwise agrees
      with the subdivision $\sigma$ of part (a). Then $\sigma'$ is quasi-geometric
      and its restriction $\Gamma'_F = \Gamma_F$ is again the non-flag complex
      shown in Figure \ref{fig:nonregtri} (a).

      (c) Let $\Gamma_0$ be the simplicial complex on the ground set $F \cup \{b',
      c', d'\}$ whose faces are $F$ and those of the simplicial subdivision of $2^F$,
      shown in Figure \ref{fig:nonregtri} (b). Let $\Gamma''$ consist of the faces
      of $2^V$ and those of the cone over $\Gamma_0$ on a new vertex $v$. We leave
      to the reader to verify that $\Gamma''$ is a flag
      simplicial complex and that it admits a quasi-geometric subdivision $\sigma'':
      \Gamma'' \to 2^V$ (satisfying $\sigma''(v) = \sigma''(F) = V$) for which the
      restriction $\Gamma''_F$ is the non-flag simplicial complex shown in Figure
      \ref{fig:nonregtri} (b). \qed
    \end{example}

  The previous examples suggest the following definitions.

    \begin{definition} \label{def:flagsub}
      Let $\Delta', \Delta$ be simplicial complexes and let $\sigma: \Delta' \to
      \Delta$ be a homology subdivision.
        \begin{itemize}
          \item[(i)] We say that $\sigma$ is \emph{vertex-induced} if for all faces
          $E \in \Delta'$ and $F \in \Delta$ the following condition holds: if every
          vertex of $E$ is a vertex of $\Delta'_F$, then $E \in \Delta'_F$.
          \item[(ii)] We say that $\sigma$ is a \emph{flag subdivision} if the
          restriction $\Delta'_F$ is a flag complex for every face $F \in \Delta$.
        \end{itemize}
    \end{definition}

  For homology (respectively, topological) subdivisions we have the hierarchy
  of properties: geometric $\Rightarrow$ vertex-induced $\Rightarrow$ quasi-geometric.
  The subdivision $\Gamma$ of Example \ref{ex:nonflagrest} is not quasi-geometric,
  while $\Gamma'$ and $\Gamma''$ are quasi-geometric but not vertex-induced (none of
  the three subdivisions is flag). Thus the second implication above is strict. An
  example discussed on \cite[p. 468]{Cha94} shows that the first implication is
  strict as well. We also point out here that if $\sigma: \Delta' \to \Delta$ is a
  vertex-induced homology subdivision and the simplicial complex $\Delta'$ is flag,
  then $\sigma$ is a flag subdivision.

\medskip
\noindent
\textbf{Joins and links.} The notion of a (vertex-induced, or flag) homology subdivision
behaves well with respect to simplicial joins and links, as we now explain. Let
$\sigma_1: \Delta'_1 \to \Delta_1$ and $\sigma_2: \Delta'_2 \to \Delta_2$ be homology
subdivisions of two simplicial complexes $\Delta_1$ and $\Delta_2$ on
disjoint ground sets. The simplicial join $\Delta'_1 \ast \Delta'_2$ is naturally a
homology subdivision of $\Delta_1 \ast \Delta_2$ with subdivision map $\sigma:
\Delta'_1 \ast \Delta'_2 \to \Delta_1 \ast \Delta_2$ defined by $\sigma(E_1 \cup E_2)
= \sigma_1 (E_1) \cup \sigma_2 (E_2)$ for $E_1 \in \Delta'_1$ and $E_2 \in \Delta'_2$.
Indeed, given faces $F_1 \in \Delta_1$ and $F_2 \in \Delta_2$, the restriction of
$\Delta'_1 \ast \Delta'_2$ to the face $F = F_1 \cup F_2 \in \Delta_1 \ast \Delta_2$
is equal to $(\Delta'_1)_{F_1} \ast (\Delta'_2)_{F_2}$ which, by Remark
\ref{rem:homjoin}, is a homology ball of dimension equal to that of $F_1 \cup F_2$.
Moreover, $\sigma^{-1} (F) = \sigma_1^{-1} (F_1) \ast \sigma_2^{-1} (F_2)$ and hence
$\sigma^{-1} (F)$ is the interior of this ball.

Similarly, let $\sigma: \Delta' \to \Delta$ be a homology subdivision and let $F$
be a common face of $\Delta$ and $\Delta'$ (such as a vertex of $\Delta$) which
satisfies $\sigma(F) = F$. An easy application of part (ii) of Lemma
\ref{lem:homology} shows that
$\link_{\Delta'} (F)$ is a homology subdivision of $\link_\Delta (F)$ with
subdivision map $\sigma_F: \link_{\Delta'} (F) \to \link_\Delta (F)$ defined by
$\sigma_F (E) = \sigma(E \cup F) \sm F$. We will refer to this subdivision as
the link of $\sigma$ at $F$; its restriction to a face $G \in \link_\Delta (F)$
satisfies $(\link_{\Delta'} (F))_G = \link_{\Delta'_{F \cup G}} (F)$.

\smallskip
The following statement is an easy consequence of the relevant definitions; the
proof is left to the reader.

\begin{lemma} \label{lem:joinlink}
The simplicial join of two vertex-induced (respectively, flag) homology subdivisions
is also vertex-induced (respectively, flag). The link of a vertex-induced (respectively,
flag) homology subdivision is also vertex-induced (respectively, flag).
\end{lemma}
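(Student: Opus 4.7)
The plan is to reduce both claims to the explicit descriptions of restrictions of the join and link subdivisions that were recorded just before the statement: namely $(\Delta'_1 \ast \Delta'_2)_{F_1 \cup F_2} = (\Delta'_1)_{F_1} \ast (\Delta'_2)_{F_2}$ and $(\link_{\Delta'}(F))_G = \link_{\Delta'_{F \cup G}}(F)$. Given these formulas, the flag cases are immediate from two standard general facts already noted in the paper: a simplicial join of flag complexes is flag, and the link of any face of a flag complex is flag. Thus if each $\sigma_i$ (respectively $\sigma$) is a flag subdivision, every restriction of the join (respectively link) subdivision is a join (respectively link) of flag complexes and hence flag.

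For the vertex-induced case of the join, I would start from an arbitrary face $E = E_1 \cup E_2$ of $\Delta'_1 \ast \Delta'_2$, where $E_i \in \Delta'_i$, together with a face $F = F_1 \cup F_2 \in \Delta_1 \ast \Delta_2$, and assume that every vertex of $E$ is a vertex of $(\Delta'_1)_{F_1} \ast (\Delta'_2)_{F_2}$. Since the ground sets of $\Delta'_1$ and $\Delta'_2$ are disjoint, this assumption splits: every vertex of $E_i$ must be a vertex of $(\Delta'_i)_{F_i}$. Applying the vertex-induced hypothesis to each $\sigma_i$ separately gives $E_i \in (\Delta'_i)_{F_i}$, whence $E$ lies in the restriction, as desired.

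For the vertex-induced case of the link, I would take a face $E \in \link_{\Delta'}(F)$ whose vertices all lie in $\link_{\Delta'_{F \cup G}}(F)$; unpacking, this means that for every vertex $v$ of $E$ we have $\{v\} \cup F \in \Delta'_{F \cup G}$, so $v$ is a vertex of $\Delta'_{F \cup G}$. The vertices of $F$ themselves are vertices of $\Delta'_F$, which is a subcomplex of $\Delta'_{F \cup G}$. Thus every vertex of $E \cup F$ is a vertex of $\Delta'_{F \cup G}$, and the vertex-induced property of $\sigma$ applied to $E \cup F \in \Delta'$ and $F \cup G \in \Delta$ yields $E \cup F \in \Delta'_{F \cup G}$, equivalently $E \in \link_{\Delta'_{F \cup G}}(F)$.

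There is no real obstacle here; the argument is a direct unpacking of definitions once one trusts the formulas for restrictions of joins and links. The two small points deserving care are the disjointness of the ground sets in the join case (which lets one split the vertex hypothesis cleanly into the two independent hypotheses on $\sigma_1$ and $\sigma_2$) and, in the link case, the observation that vertices of $F$ automatically belong to $\Delta'_{F \cup G}$, so the hypothesis on vertices of $E$ alone is enough to cover all vertices of $E \cup F$.
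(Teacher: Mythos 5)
Your proof is correct, and since the paper explicitly leaves this lemma to the reader, the routine unpacking of the definitions that you give is exactly the intended argument: the flag cases follow from the recorded restriction formulas plus the standing facts that joins and links of flag complexes are flag, and the vertex-induced cases follow by splitting the vertex hypothesis across the disjoint ground sets (join) or by augmenting $E$ to $E\cup F$ before invoking vertex-inducedness of $\sigma$ (link). The two care points you flag --- disjointness of ground sets and the observation that vertices of $F$ already lie in $\Delta'_{F\cup G}$ --- are precisely the only places where something could silently go wrong, and you handle them correctly.
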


\medskip
\noindent
\textbf{Stellar subdivisions.}
We recall the following standard way to subdivide a simplicial complex. Given a
simplicial complex $\Delta$ on the ground set $\Omega$, a face $F \in \Delta$ and
an element $v$ not in $\Omega$, the \emph{stellar subdivision} of $\Delta$ on $F$
(with new vertex $v$) is the simplicial complex
  $$ \Delta' \ = \ ( \Delta \sm \st_\Delta (F) ) \ \cup \ ( \{v\} \ast
     \partial (2^F) \ast \link_\Delta (F) ) $$
on the ground set $\Omega \cup \{v\}$, where $\partial(2^F) = 2^F \sm \{F\}$. The
map $\sigma: \Delta' \to \Delta$, defined by
  $$ \sigma(E) \ = \ \begin{cases}
         E, & \text{if $E \in \Delta$ } \\
         (E \sm \{v\}) \cup F, & \text{otherwise} \end{cases} $$
for $E \in \Delta'$, is a topological (and thus a homology) subdivision of $\Delta$.
We leave to the reader to check that if $\Delta$ is a flag complex and $F \in
\Delta$ is an edge, then the stellar subdivision of $\Delta$ on $F$ is again a flag
complex.

  \section{Face enumeration, $\gamma$-vectors and local $h$-vectors}
  \label{sec:enu}

  This section reviews the definitions and main properties of the enumerative invariants
  of simplicial complexes and their subdivisions which will appear in the following
  sections, namely the $h$-vector of a simplicial complex, the $\gamma$-vector of an
  Eulerian simplicial complex and the local $h$-vector of a simplicial subdivision of
  a simplex. Some new results on local $h$-vectors are included.

  \subsection{$h$-vectors}
  \label{subsec:h}

  A fundamental enumerative invariant of a $(d-1)$-dimensional simplicial complex
  $\Delta$ is the $h$-polynomial, defined by
    $$ h(\Delta, x) \ = \ \sum_{F \in \Delta} \ x^{|F|} (1-x)^{d-|F|}. $$
  The $h$-vector of $\Delta$ is the sequence $h(\Delta) = (h_0 (\Delta),
  h_1 (\Delta),\dots,h_d (\Delta))$, where $h(\Delta, x) = \sum_{i=0}^d h_i (\Delta) x^i$.
  The number
    $$ (-1)^{d-1} h_d (\Delta) \ = \ \sum_{F \in \Delta} \, (-1)^{|F| - 1} $$
  is the reduced Euler characteristic of $\Delta$ and is denoted by $\widetilde{\chi}
  (\Delta)$. The polynomial $h(\Delta, x)$ satisfies $h_i (\Delta) = h_{d-i} (\Delta)$
  \cite[Section 3.14]{StaEC1} if $\Delta$ is an \emph{Eulerian} complex, meaning that
    $$ \widetilde{\chi} (\link_\Delta (F)) \ = \ (-1)^{\dim \, \link_\Delta (F)} $$
  holds for every $F \in \Delta$. For the simplicial join of two simplicial complexes
  $\Delta_1$ and $\Delta_2$ we have $h(\Delta_1 \ast \Delta_2, x) = h(\Delta_1, x)
  h(\Delta_2, x)$. For a homology ball or sphere $\Delta$ of dimension $d-1$ we set
    $$ h(\inte(\Delta), x) \ = \ \sum_{F \in \inte(\Delta)} \ x^{|F|}
       (1-x)^{d-|F|} $$
  and recall the following well-known statement (see, for instance, Theorem 7.1 in
  \cite[Chapter II]{StaCCA} and \cite[Section 2.1]{Ath10} for additional references).
    \begin{lemma} \label{lem:homformal}
      Let $\Delta$ be a $(d-1)$-dimensional simplicial complex. If $\Delta$ is
      either a homology ball or a homology sphere over $\kk$, then $x^d \, h (\Delta,
      1/x) = h(\inte(\Delta), x)$.
    \end{lemma}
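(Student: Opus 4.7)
The plan is to handle the sphere and ball cases separately: the sphere case is the standard Dehn--Sommerville symmetry, and I reduce the ball case to it by coning off the boundary.

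If $\Delta$ is a homology sphere, then $\inte(\Delta) = \Delta$ and the claim reduces to $x^d h(\Delta, 1/x) = h(\Delta, x)$, i.e.\ $h_i(\Delta) = h_{d-i}(\Delta)$. This is precisely the symmetry recorded in Section~\ref{subsec:h} for Eulerian complexes, and every homology sphere is Eulerian because each link, being itself a homology sphere, has the reduced Euler characteristic of a sphere of the appropriate dimension.

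Now suppose $\Delta$ is a homology ball with boundary $\partial\Delta$, a $(d-2)$-dimensional homology sphere. First, splitting the defining sum for $h(\Delta, x)$ according to whether a face lies in $\inte(\Delta)$ or in $\partial\Delta$ and using the elementary identity $x^{|F|}(1-x)^{d-|F|} = (1-x)\, x^{|F|}(1-x)^{(d-1)-|F|}$ on the boundary part, one obtains the relative formula
\begin{equation*}
h(\inte(\Delta), x) \;=\; h(\Delta, x) \;-\; (1-x)\, h(\partial\Delta, x).
\end{equation*}
It therefore suffices to show $x^d h(\Delta, 1/x) = h(\Delta, x) - (1-x)\, h(\partial\Delta, x)$. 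To this end I would adjoin a new vertex $v$ and form the doubling $\hat\Delta = \Delta \cup (v \ast \partial\Delta)$. A direct face count gives $h(\hat\Delta, x) = h(\Delta, x) + x\, h(\partial\Delta, x)$, since the faces of $\hat\Delta$ not already in $\Delta$ are precisely those of the form $F \cup \{v\}$ with $F \in \partial\Delta$, each contributing $x^{|F|+1}(1-x)^{d-|F|-1}$. Applying the sphere case to $\hat\Delta$ and to $\partial\Delta$ then yields $x^d h(\hat\Delta, 1/x) = h(\hat\Delta, x)$ and $x^{d-1} h(\partial\Delta, 1/x) = h(\partial\Delta, x)$, and substituting the first expression and simplifying produces the required identity.

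The main obstacle is verifying that the doubling $\hat\Delta$ is itself a $(d-1)$-dimensional homology sphere. This requires checking the link conditions of Section~\ref{subsec:homPL} for every face $G \in \hat\Delta$, according to three cases: if $G \in \inte(\Delta)$ then $\link_{\hat\Delta}(G) = \link_\Delta(G)$, already a homology sphere; if $G \in \partial\Delta$ then $\link_{\hat\Delta}(G)$ is itself the analogous doubling of the homology ball $\link_\Delta(G)$ along its boundary $\link_{\partial\Delta}(G)$, which is handled by a dimension induction; and if $v \in G$, writing $G = \{v\} \cup F'$ with $F' \in \partial\Delta$ identifies $\link_{\hat\Delta}(G)$ with $\link_{\partial\Delta}(F')$, a homology sphere of the correct dimension. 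Combining these reductions with Remark~\ref{rem:homjoin} completes the verification.
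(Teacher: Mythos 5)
The paper supplies no proof of this lemma: it is stated as a ``well-known statement'' with citations to Stanley's \emph{Combinatorics and Commutative Algebra} (Chapter II, Theorem 7.1) and to [Ath10]. Your argument is therefore a self-contained proof of a fact the paper outsources, and it is correct. The sphere case is exactly Dehn--Sommerville symmetry, justified as you say by the observation that homology spheres are Eulerian (every link is a homology sphere of the appropriate dimension and so has reduced Euler characteristic $(-1)^{\dim}$). For the ball case, the two elementary identities $h(\inte(\Delta),x)=h(\Delta,x)-(1-x)\,h(\partial\Delta,x)$ and $h(\hat\Delta,x)=h(\Delta,x)+x\,h(\partial\Delta,x)$ are correct, and feeding the sphere case for $\hat\Delta$ and for $\partial\Delta$ into them does yield the claim after simplification. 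The only substantive step is confirming that $\hat\Delta=\Delta\cup(v\ast\partial\Delta)$ is a $(d-1)$-dimensional homology sphere; your link-by-link case analysis (interior face of $\Delta$, boundary face handled by dimensional induction using Lemma~\ref{lem:homology}(ii), face containing $v$ reduced to a link in $\partial\Delta$) is the right structure, though you should also record the $G=\varnothing$ case explicitly: a Mayer--Vietoris argument with $\Delta$ and $v\ast\partial\Delta$ both $\kk$-acyclic and $\Delta\cap(v\ast\partial\Delta)=\partial\Delta$ a homology $(d-2)$-sphere gives $\widetilde{H}_i(\hat\Delta,\kk)\cong\widetilde{H}_{i-1}(\partial\Delta,\kk)$, which is what is needed. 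This doubling construction is the standard route and is very likely what the cited sources use.
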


  \subsection{$\gamma$-vectors} \label{subsec:gamma}
  Let $h = (h_0, h_1,\dots,h_d)$ be a vector with real coordinates and let $h(x) =
  \sum_{i=0}^d h_i x^i$ be the associated real polynomial of degree at most $d$. We say
  that \emph{$h(x)$ has symmetric coefficients}, and that the vector $h$ is
  \emph{symmetric}, if $h_i = h_{d-i}$ holds for $0 \le i \le d$. It is easy to check
  \cite[Proposition 2.1.1]{Ga05} that $h(x)$ has symmetric coefficients if and only if
  there exists a real polynomial $\gamma(x) = \sum_{i=0}^{\lfloor d/2 \rfloor} \gamma_i
  x^i$ of degree at most $\lfloor d/2 \rfloor$, satisfying
    \begin{equation} \label{eq:defgamma}
      h(x) \ = \ (1+x)^d \ \gamma \left( \frac{x}{(1+x)^2} \right) \, = \,
      \sum_{i=0}^{\lfloor d/2 \rfloor} \, \gamma_i x^i (1+x)^{d-2i}.
    \end{equation}
  In that case, $\gamma(x)$ is uniquely determined by $h(x)$ and called
  the \emph{$\gamma$-polynomial} associated to $h(x)$; the sequence $(\gamma_0,
  \gamma_1,\dots,\gamma_{\lfloor d/2 \rfloor})$ is called the \emph{$\gamma$-vector}
  associated to $h$. We will refer to the $\gamma$-polynomial associated to the
  $h$-polynomial of an Eulerian complex $\Delta$ as the \emph{$\gamma$-polynomial of
  $\Delta$} and will denote it by $\gamma (\Delta, x)$. Similarly, we will refer to
  the $\gamma$-vector associated to the $h$-vector of an Eulerian complex $\Delta$ as
  the \emph{$\gamma$-vector of $\Delta$} and will denote it by $\gamma (\Delta)$.

  \subsection{Local $h$-vectors}
  \label{sec:monolocal}

  We now recall some of the basics of the theory of face enumeration for subdivisions
  of simplicial complexes \cite{Sta92} \cite[Section III.10]{StaCCA}. The following
  definition is a restatement of \cite[Definition 2.1]{Sta92} for homology (rather
  than topological) subdivisions of the simplex.
    \begin{definition} \label{def:localh}
      Let $V$ be a set with $d$ elements and let $\Gamma$ be a homology subdivision
      of the simplex $2^V$. The polynomial $\ell_V (\Gamma, x) = \ell_0 + \ell_1 x +
      \cdots + \ell_d x^d$ defined by
        \begin{equation} \label{eq:deflocalh}
          \ell_V (\Gamma, x) \ = \sum_{F \subseteq V} \ (-1)^{d - |F|} \,
          h (\Gamma_F, x)
        \end{equation}
      is the \emph{local $h$-polynomial} of $\Gamma$ (with respect to $V$). The
      sequence $\ell_V (\Gamma) = (\ell_0, \ell_1,\dots,\ell_d)$ is the \emph{local
      $h$-vector} of $\Gamma$ (with respect to $V$).
    \end{definition}
  The following theorem, stated for homology subdivisions, summarizes some of the
  main properties of local $h$-vectors (see Theorems 3.2 and 3.3 and Corollary 4.7 in
  \cite{Sta92}).

    \begin{theorem} \label{thm:stalocal}
      \begin{itemize}
        \item[(i)]
          For every homology subdivision $\Delta'$ of a simplicial complex $\Delta$
          we have
            \begin{equation} \label{eq:hformula}
              h (\Delta', x) \ = \ \sum_{F \in \Delta} \,
              \ell_F (\Delta'_F, x) \, h (\link_\Delta (F), x).
            \end{equation}
        \item[(ii)]
          The local $h$-vector $\ell_V (\Gamma)$ is symmetric for every homology
          subdivision $\Gamma$ of the simplex $2^V$.
        \item[(iii)]
          The local $h$-vector $\ell_V (\Gamma)$ has nonnegative coordinates for every
          quasi-geometric homology subdivision $\Gamma$ of the simplex $2^V$.
      \end{itemize}
    \end{theorem}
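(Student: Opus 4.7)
My plan is to prove the three parts in order, treating (i) and (ii) as formal identities that follow from Möbius inversion together with duality for homology balls, and to expect that (iii) is the genuine obstacle since it is where the quasi-geometric hypothesis must be used in an essential way.

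For (i), I begin with the observation that Möbius inversion on the Boolean lattice $2^V$ converts Definition \ref{def:localh} into the equivalent statement $h(\Gamma, x) = \sum_{F \subseteq V} \ell_F(\Gamma_F, x)$ for every homology subdivision $\Gamma$ of a simplex $2^V$. For a general subdivision $\sigma: \Delta' \to \Delta$, I would then substitute this identity into the right-hand side of (\ref{eq:hformula}) and interchange the order of summation. The carrier partition $\Delta' = \bigsqcup_{F \in \Delta} \inte(\Delta'_F)$, together with the direct computation $h(\Delta', x) = \sum_{F \in \Delta} (1-x)^{d-|F|} h(\inte(\Delta'_F), x)$, reduces the desired formula to an elementary bookkeeping identity relating $h(\link_\Delta(F), x)$ to sums over intervals in the face poset of $\Delta$.

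For (ii), the same Möbius inversion on $2^V$, combined with the expansion $h(\Gamma_F, x) = \sum_{G \subseteq F}(1-x)^{|F|-|G|} h(\inte(\Gamma_G), x)$, rewrites the definition as $\ell_V(\Gamma, x) = \sum_{G \subseteq V} (-x)^{d-|G|} h(\inte(\Gamma_G), x)$. Since each restriction $\Gamma_G$ is a homology ball of dimension $|G| - 1$ over $\kk$, Lemma \ref{lem:homformal} gives $h(\inte(\Gamma_G), x) = x^{|G|} h(\Gamma_G, 1/x)$. Substituting and factoring out $x^d$ yields $\ell_V(\Gamma, x) = x^d \ell_V(\Gamma, 1/x)$, which is the desired symmetry.

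The hard part is (iii); Example \ref{ex:counter}, announced in the introduction, shows that the quasi-geometric hypothesis cannot be dropped. My plan is to follow Stanley's algebraic strategy: pass to the Stanley--Reisner ring of $\Gamma$, consider the squarefree module corresponding to the interior faces of $\Gamma$, and exhibit $\ell_V(\Gamma, x)$ as the Hilbert series of an Artinian reduction of this module by a generic linear system of parameters indexed by the vertices of $V$. The quasi-geometric condition is precisely what rules out the existence of a face of $\Gamma$ whose vertices all have carriers contained in a common proper subsimplex of $2^V$, and this is exactly what is needed for the generic linear forms to act as a regular sequence on the module. Once this is established, nonnegativity of $\ell_V(\Gamma)$ follows from nonnegativity of Hilbert functions. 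The main technical adaptation from Stanley's original topological-subdivision proof is to verify that the relative Cohen--Macaulay property one needs still holds when ``topological ball'' is replaced by ``homology ball''; this rests on Reisner's criterion together with the homology vanishing built into Definition \ref{def:sub}.
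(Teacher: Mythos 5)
Your proposal is correct and takes essentially the same route as the paper, which simply observes that Stanley's proofs of Theorems 3.2 and 4.6 in \cite{Sta92} go through verbatim for homology subdivisions and that Lemma \ref{lem:homformal} makes every homology subdivision a formal subdivision in Stanley's sense, so that parts (i) and (ii) are instances of his Corollary 7.7 and Theorem 7.8. You usefully spell out the underlying computations for (i) and (ii) (M\"obius inversion together with the duality $x^{|G|}h(\Gamma_G,1/x)=h(\inte(\Gamma_G),x)$) and correctly identify the one adaptation needed for (iii), namely that the relevant Cohen--Macaulayness of the restrictions $\Gamma_F$ still holds over $\kk$ when ``topological ball'' is replaced by ``homology ball.'' One small imprecision: your paraphrase of quasi-geometric omits the dimension comparison $\dim F < \dim E$ in Stanley's Definition 4.1(a), and the linear forms there are not fully generic but are supported on vertices whose carriers contain $v_i$; neither slip affects the soundness of the outline.
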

    \begin{proof}
    Parts (i) and (iii) follow from the proofs of Theorems 3.2 and 4.6, respectively,
    in \cite{Sta92}. Moreover, Lemma \ref{lem:homformal} implies that every homology
    subdivision of a simplicial complex is a formal subdivision, in the sense of
    \cite[Definition 7.4]{Sta92}. Thus, parts (i) and (ii) are special cases of
    Corollary 7.7 and Theorem 7.8, respectively, in \cite{Sta92}.
    \end{proof}

    \begin{example} \label{ex:counter} The local $h$-polynomial of the subdivision in
    part (a) of Example \ref{ex:nonflagrest} was computed in \cite{Sta92} as $\ell_V
    (\Gamma, x) = -x^2$. This shows that the assumption in Theorem \ref{thm:stalocal}
    (iii) that $\Gamma$ is quasi-geometric is essential. For part (b) of Example
    \ref{ex:nonflagrest} we can easily compute that $\ell_V (\Gamma', x) = x + x^3$.
    Since $\Gamma'$ is quasi-geometric, this disproves \cite[Conjecture 5.4]{Sta92}
    (see also \cite[Section 6]{Cha94} \cite[p. 134]{StaCCA}), stating that local
    $h$-vectors of quasi-geometric subdivisions are unimodal. \qed
    \end{example}

  The previous example suggests the following question.

    \begin{question} \label{que:unimodal}
      Is the local $h$-vector $\ell_V (\Gamma)$ unimodal for every vertex-induced
      homology subdivision $\Gamma$ of the simplex $2^V$?
    \end{question}

  We now show that local $h$-vectors also enjoy a locality property (this will be
  useful in the proof of Proposition \ref{prop:edgesub}).

    \begin{proposition} \label{prop:localformula}
      Let $\sigma: \Gamma \to 2^V$ be a homology subdivision of the simplex $2^V$.
      For every homology subdivision $\Gamma'$ of $\Gamma$ we have
        \begin{equation} \label{eq:localformula}
          \ell_V (\Gamma', x) \ = \, \sum_{E \in \Gamma} \ \ell_E (\Gamma'_E, x) \,
          \ell_V (\Gamma, E, x),
        \end{equation}
      where
        \begin{equation} \label{eq:deflocalhrel}
          \ell_V (\Gamma, E, x) \ = \sum_{\sigma(E) \subseteq F \subseteq V} \
          (-1)^{d - |F|} \, h (\link_{\Gamma_F} (E), x)
        \end{equation}
      for $E \in \Gamma$.
    \end{proposition}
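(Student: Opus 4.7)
The plan is to expand $h(\Gamma'_F, x)$ via part (i) of Theorem \ref{thm:stalocal} for each $F \subseteq V$, substitute into the defining formula \eqref{eq:deflocalh} for $\ell_V(\Gamma', x)$, and then swap the order of summation.

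First I would verify that, writing $\tau: \Gamma' \to \Gamma$ for the subdivision map and composing with $\sigma$ so that $\Gamma'$ becomes a homology subdivision of $2^V$, the restriction $(\Gamma')_F := (\sigma \circ \tau)^{-1}(2^F)$ coincides with $\tau^{-1}(\Gamma_F)$ and is a homology subdivision of $\Gamma_F$. This is a routine consequence of Definition \ref{def:sub}, since $\Gamma_F$ is a subcomplex of $\Gamma$ and all defining conditions are local at the faces of $\Gamma_F$; moreover for $E \in \Gamma_F$ the restriction $((\Gamma')_F)_E$ equals $\Gamma'_E$ because $\tau^{-1}(2^E) \subseteq \tau^{-1}(\Gamma_F)$. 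With this in hand, Theorem \ref{thm:stalocal}(i), applied to the homology subdivision $(\Gamma')_F$ of $\Gamma_F$, yields
\begin{equation*}
  h((\Gamma')_F, x) \ = \ \sum_{E \in \Gamma_F} \ell_E(\Gamma'_E, x) \, h(\link_{\Gamma_F}(E), x).
\end{equation*}

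Next I would substitute this into \eqref{eq:deflocalh}, obtaining
\begin{equation*}
  \ell_V(\Gamma', x) \ = \ \sum_{F \subseteq V} (-1)^{d-|F|} \sum_{E \in \Gamma_F} \ell_E(\Gamma'_E, x) \, h(\link_{\Gamma_F}(E), x),
\end{equation*}
and then swap the order of summation. Since $\Gamma_F = \sigma^{-1}(2^F)$, membership $E \in \Gamma_F$ is equivalent to $\sigma(E) \subseteq F$, so collecting terms according to $E \in \Gamma$ gives
\begin{equation*}
  \ell_V(\Gamma', x) \ = \ \sum_{E \in \Gamma} \ell_E(\Gamma'_E, x) \sum_{\sigma(E) \subseteq F \subseteq V} (-1)^{d-|F|} \, h(\link_{\Gamma_F}(E), x),
\end{equation*}
and the inner sum is exactly $\ell_V(\Gamma, E, x)$ by \eqref{eq:deflocalhrel}. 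This is the desired formula \eqref{eq:localformula}.

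The proof is thus a short bookkeeping argument once the applicability of Theorem \ref{thm:stalocal}(i) to $(\Gamma')_F \to \Gamma_F$ is secured; that verification (essentially the observation that a homology subdivision restricts to a homology subdivision on any subcomplex of the base) is the only mildly non-routine point, and can be dispatched by checking the two conditions of Definition \ref{def:sub} face by face.
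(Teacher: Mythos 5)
Your proof follows exactly the same route as the paper's: expand $h(\Gamma'_F, x)$ via the subdivision formula (\ref{eq:hformula}) applied to $\Gamma'_F \to \Gamma_F$, substitute into (\ref{eq:deflocalh}), and swap the order of summation. The only difference is that you spell out the verification that $\Gamma'_F$ is a homology subdivision of $\Gamma_F$ with $((\Gamma')_F)_E = \Gamma'_E$, which the paper simply asserts with ``By assumption''; this is a correct and harmless elaboration.
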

    \begin{proof}
      By assumption, $\Gamma'_F$ is a homology subdivision of $\Gamma_F$ for every
      $F \subseteq V$. Thus, using the defining equation (\ref{eq:deflocalh}) for
      $\ell_V (\Gamma', x)$ and (\ref{eq:hformula}) to expand $h (\Gamma'_F, x)$ for
      $F \subseteq V$, we get
        \begin{eqnarray*}
        \ell_V (\Gamma', x) &=& \sum_{F \subseteq V} \ (-1)^{d - |F|} \, h(\Gamma'_F,
        x) \\
        & & \\
        &=& \sum_{F \subseteq V} \ (-1)^{d - |F|} \
        \sum_{E \in \Gamma_F} \, \ell_E (\Gamma'_E, x) \, h (\link_{\Gamma_F} (E),
        x) \\
        & & \\
        &=& \sum_{E \in \Gamma} \, \ell_E (\Gamma'_E, x) \sum_{F \subseteq V: \,
        \sigma(E) \subseteq F} \ (-1)^{d - |F|} \, h (\link_{\Gamma_F} (E), x)
        \end{eqnarray*}
      and the proof follows.
    \end{proof}

    \begin{remark} \label{rem:relative}
      We call the polynomial $\ell_V (\Gamma, E, x)$, defined by (\ref{eq:deflocalhrel}),
      the \textit{relative local $h$-polynomial} of $\Gamma$ (with respect to $V$) at $E$.
      This polynomial reduces to $\ell_V (\Gamma, x)$ for $E = \varnothing$ and shares
      many of the important properties of $\ell_V (\Gamma, x)$, established in \cite{Sta92}.
      For instance, using ideas of \cite{Sta92} and their refinements in \cite{Ath10}, one
      can show that $\ell_V (\Gamma, E, x)$ has symmetric coefficients, in the sense that
        $$ x^{d-|E|} \, \ell_V (\Gamma, E, 1/x) \ = \ \ell_V (\Gamma, E, x), $$
      for every homology subdivision $\Gamma$ of $2^V$ and $E \in \Gamma$ and that $\ell_V
      (\Gamma, E, x)$ has nonnegative coefficients for every quasi-geometric homology
      subdivision $\Gamma$ of $2^V$ and $E \in \Gamma$. The following monotonicity
      property of local $h$-vectors is a consequence of the latter statement and
      (\ref{eq:localformula}): for every quasi-geometric homology subdivision $\Gamma$ of
      $2^V$ and every quasi-geometric homology subdivision $\Gamma'$ of $\Gamma$, we have
      $\ell_V (\Gamma', x) \ge \ell_V (\Gamma, x)$. Since these results will not be used in
      this paper, detailed proofs will appear elsewhere.
      \qed
    \end{remark}

  \section{Flag subdivisions of $\Sigma_{d-1}$}
  \label{sec:PL}

  This section proves Theorem \ref{thm:main}, as well as a result on flag subdivisions 
  of the simplex (Proposition \ref{prop:balltosphere}) which will be used in Section 
  \ref{sec:gamma}. 

  The following lemma gives several technical properties of homology balls and spheres 
  (over the field $\kk$). We will only sketch the proof, which is fairly 
  straightforward and uses standard tools from algebraic topology.

    \begin{lemma} \label{lem:homology}
      \begin{itemize}
        \item[(i)] If $\Delta$ is a homology sphere (respectively, ball) of dimension
        $d-1$, then $\link_\Delta (F)$ is a homology sphere of dimension $d-|F|-1$ for
        every $F \in \Delta$ (respectively, for every interior face $F \in \Delta$).
        \item[(ii)] If $\Delta$ is a homology ball of dimension $d-1$ and $F \in
        \Delta$ is a boundary face, then $\link_\Delta (F)$ is a homology ball of
        dimension $d-|F|-1$ with interior equal to $\link_\Delta (F) \cap \inte
        (\Delta)$.
        \item[(iii)] If $\Delta$ is a homology sphere (respectively, ball), then the
        cone over $\Delta$ is a homology ball whose boundary is equal to $\Delta$
        (respectively, to the union of $\Delta$ with the cone over the boundary of
        $\Delta$).
        \item[(iv)] Let $\Delta_1$ and $\Delta_2$ be homology balls of dimension
        $d$. If $\Delta_1 \cap \Delta_2$ is a homology ball of dimension $d-1$
        which is contained in (respectively, is equal to) the boundary of both
        $\Delta_1$ and $\Delta_2$,
        then $\Delta_1 \cup \Delta_2$ is a homology ball (respectively, sphere) of
        dimension $d$.
        \item[(v)] Let $\Delta$ be a homology sphere of dimension $d-1$. If
        $\Gamma$ is a subcomplex of $\Delta$ which is a homology ball of dimension
        $d-1$, then the complement of the interior of $\Gamma$ in $\Delta$ is also
        a homology ball of dimension $d-1$ whose boundary is equal to that of
        $\Gamma$.
      \end{itemize}
    \end{lemma}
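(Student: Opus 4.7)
The plan is to verify each part using two standard tools: the identity $\link_{\link_\Delta(F)}(G) = \link_\Delta(F \cup G)$ (for disjoint faces whose union lies in $\Delta$), and the long exact Mayer--Vietoris sequence in reduced simplicial homology.

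Parts (i) and (ii) reduce almost directly to the link-of-link identity. For (i) with $\Delta$ a sphere, the conclusion is immediate from the hypothesis. For balls with $F$ interior, I use that $\partial \Delta$ is a subcomplex to conclude that every $F \cup G \in \Delta$ is also interior, so the required homological profile for $\link_\Delta(F \cup G)$ is exactly the homology-sphere profile. For (ii) I propose $\link_{\partial \Delta}(F)$ as the candidate boundary of $\link_\Delta(F)$; part (i) applied to the sphere $\partial \Delta$ shows it is a homology sphere of the correct dimension, and the homological condition at each $G \in \link_\Delta(F)$ is read off from whether $F \cup G \in \partial \Delta$, which amounts to whether $G \in \link_{\partial \Delta}(F)$. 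Part (iii) then follows from Remark~\ref{rem:homjoin}: the cone $v \ast \Delta$ is the simplicial join of the zero-dimensional homology ball $\{\varnothing, \{v\}\}$ with $\Delta$, whose interior is $\{\{v\}\} \ast \inte(\Delta)$; the remaining faces assemble into $\Delta$ in the sphere case and into $\Delta \cup (v \ast \partial \Delta)$ in the ball case.

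Parts (iv) and (v) are where Mayer--Vietoris does the real work. For (iv), the sequence applied to the cover $\Delta_1 \cup \Delta_2$ with intersection $\Delta_1 \cap \Delta_2$ collapses, in the strict-containment case, to give vanishing reduced homology (the ball case); in the equality case the connecting map provides an isomorphism onto a top-degree copy of $\kk$, detecting the sphere. The link condition at individual faces then reduces to parts (i)--(iii) applied to smaller complexes, with a split into the cases $F \in \Delta_1 \cap \Delta_2$, $F \in \Delta_1 \sm \Delta_2$, and $F \in \Delta_2 \sm \Delta_1$. For (v), I argue by induction on $d$, with a trivial base case. Mayer--Vietoris for $\Delta = \Gamma \cup \Gamma^c$, where $\Gamma^c := \Delta \sm \inte(\Gamma)$ and $\Gamma \cap \Gamma^c = \partial \Gamma$, computes the reduced homology of $\Gamma^c$ once one identifies the connecting map $\widetilde H_{d-1}(\Delta) \to \widetilde H_{d-2}(\partial \Gamma)$ as an isomorphism of one-dimensional $\kk$-spaces, via the fact that the fundamental class of the sphere $\Delta$ decomposes as a chain in $\Gamma$ plus a chain in $\Gamma^c$ whose boundaries each equal $\pm$ the fundamental class of $\partial \Gamma$. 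For links, if $F \in \Gamma^c \sm \Gamma$ then $\link_{\Gamma^c}(F) = \link_\Delta(F)$ is already a homology sphere by (i); if $F \in \partial \Gamma$, then $\link_{\Gamma^c}(F)$ is the complement of $\inte(\link_\Gamma(F))$ inside the sphere $\link_\Delta(F)$, and the inductive hypothesis applies after (i) and (ii) identify these as a sphere and a ball of one lower dimension.

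The main obstacle I expect is the careful bookkeeping in (v): confirming that $\partial \Gamma$ is genuinely the boundary of $\Gamma^c$ (rather than merely contained in it), identifying the Mayer--Vietoris connecting map as an isomorphism via the fundamental-class description, and ensuring the induction on $d$ is set up so that the link reductions stay within its scope.
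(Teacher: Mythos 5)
Your treatment of parts (i)--(iv) follows the paper's proof in its essentials: the link-of-link identity $\link_{\link_\Delta(F)}(E) = \link_\Delta(E\cup F)$ together with the observation that faces containing an interior face are interior yields (i) and (ii); cone and join properties give (iii); and a direct Mayer--Vietoris computation gives (iv). For part (v) you take a genuinely different route. The paper regards the pairs $(\|\Gamma\|, \|\partial\Gamma\|)$ and $(\|\Delta\|, \|\RK\|)$, where $\RK = \Delta \sm \inte(\Gamma)$, as compact orientable relative homology manifolds and invokes Lefschetz duality together with the long exact sequence of a pair, asserting that ``similar arguments'' handle links of faces. You instead apply Mayer--Vietoris to $\Delta = \Gamma \cup \RK$ with $\Gamma \cap \RK = \partial\Gamma$ and handle the link conditions by induction on $d$. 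This avoids Lefschetz duality altogether, which is more elementary, and the inductive reduction of the links is sound: $\link_\RK(F) = \link_\Delta(F)$ for $F \in \RK \sm \Gamma$, while for $F \in \partial\Gamma$ the complex $\link_\RK(F)$ is the complement of $\inte(\link_\Gamma(F))$ in $\link_\Delta(F)$, which by (i) and (ii) is exactly the hypothesis of (v) one dimension lower.

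The step you flag as the ``main obstacle'' is, as written, a genuine gap. Mayer--Vietoris alone gives the exact sequence
\begin{equation*}
0 \to \widetilde{H}_{d-1}(\RK;\kk) \to \widetilde{H}_{d-1}(\Delta;\kk) \to \widetilde{H}_{d-2}(\partial\Gamma;\kk) \to \widetilde{H}_{d-2}(\RK;\kk) \to 0
\end{equation*}
(the terms involving $\widetilde{H}_*(\Gamma)$ vanish), so you must rule out the possibility that the connecting map $\delta$ is zero. Your fundamental-class sketch is the right idea: decompose a generator $z$ of $\widetilde{H}_{d-1}(\Delta)$ as $z = z_1 + z_2$ with $z_1$ supported on $\inte(\Gamma)$ and $z_2$ on $\RK$ (a well-defined splitting, since $\partial\Gamma$ has no $(d-1)$-faces), note that $\delta(z) = [\partial z_1]$, and use that any nonzero $(d-2)$-cycle in the $(d-2)$-dimensional complex $\partial\Gamma$ automatically generates $\widetilde{H}_{d-2}(\partial\Gamma)$. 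What remains is to show $\partial z_1 \ne 0$; this follows from $\widetilde{H}_{d-1}(\Gamma) = 0$ once one knows $z_1 \ne 0$, i.e.\ once one knows that $z$ has nonzero coefficient on some facet of $\Gamma$. That last point requires the fact that a homology $(d-1)$-sphere over $\kk$ is a strongly connected pseudomanifold, so that the generator of its top homology is supported on \emph{every} facet. The pseudomanifold condition is immediate from the link hypothesis (links of $(d-2)$-faces are zero-spheres), but strong connectivity is an additional fact --- true, for instance, because homology spheres are Cohen--Macaulay --- which your outline never states or uses. The same fact is also what shows that the boundary of $\RK$ equals $\partial\Gamma$ exactly rather than merely containing it. Until this lemma is supplied, the induction does not close.
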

    \begin{proof}
    We first observe that for all faces $F \in \Delta$ and $E \in \link_\Delta (F)$, 
    the link of $E$ in $\link_\Delta (F)$ is equal to $\link_\Delta (E \cup F)$. 
    Moreover, if $\Delta$ is a homology ball and $F$ is an interior face, then so 
    is $E \cup F$. Part (i) follows from these facts and the definition of homology
    balls and spheres. Part (ii) is an easy consequence of part (i) and the relevant 
    definitions. Part (iii) is an easy consequence of the relevant definitions and 
    the fact that cones have vanishing reduced homology. Part (iv) follows by an easy 
    application of the Mayer-Vietoris long exact sequence \cite[\S 25]{Mun84}.

    For the last part, we let $\RK$ denote the complement of the interior of $\Gamma$
    in $\Delta$ and note that the pairs $(\|\Gamma\|, \|\partial \Gamma\|)$ and
    $(\|\Delta\|, \|\RK\|)$ are compact triangulated relative homology manifolds which
    are orientable over $\kk$. Applying the Lefschetz duality theorem \cite[\S 70]{Mun84}
    and the long exact homology sequence \cite[\S 23]{Mun84} to these pairs shows that
    $\RK$ has trivial reduced homology over $\kk$. Similar arguments work for the
    links of faces of $\RK$. The details are omitted.
    \end{proof}

  \begin{remark} \label{rem:PL}
  Although not all parts of Lemma \ref{lem:homology} remain valid if homology balls
  and spheres are replaced by topological balls and spheres, they do hold for the
  subclasses of PL balls and PL spheres (we refer the reader to \cite[Section 4.7
  (d)]{BSZ99} for this claim, for a short introduction to PL topology and for
  additional references). Thus, the results of this paper remain valid when homology
  balls and spheres are replaced by PL balls and spheres and the notion of
  homology subdivision is replaced by its natural PL analogue. \qed
  \end{remark}

  The following lemma will also be essential in the proof of Theorem \ref{thm:main}. 
  A similar result has appeared in \cite[Lemma 3.2]{Bar10}.

\begin{lemma} \label{lem:starunion}
Let $\Delta$ be a flag $(d-1)$-dimensional homology sphere. For every nonempty
face $F$ of $\Delta$, the subcomplex $\bigcup_{v \in F} \overline{\st}_\Delta
(v)$ is a homology $(d-1)$-dimensional ball whose interior is equal to
$\bigcup_{v \in F} \st_\Delta (v)$.
\end{lemma}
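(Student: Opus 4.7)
The plan is induction on $|F|$, uniformly over all flag homology spheres. For the base case $|F|=1$, say $F=\{v\}$, Lemma~\ref{lem:homology}(i) gives that $\link_\Delta(v)$ is a homology $(d-2)$-sphere, and Lemma~\ref{lem:homology}(iii) then yields that $\overline{\st}_\Delta(v) = v \ast \link_\Delta(v)$ is a homology $(d-1)$-ball with boundary $\link_\Delta(v)$ and interior $\st_\Delta(v)$, as required.

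For the inductive step, I would write $F = F' \cup \{v\}$ with $|F'| = |F| - 1 \geq 1$, and set $B' = \bigcup_{w \in F'} \overline{\st}_\Delta(w)$ and $S = \overline{\st}_\Delta(v) = v \ast L_v$, where $L_v := \link_\Delta(v)$. By the inductive hypothesis, $B'$ is a homology $(d-1)$-ball with interior $\{G \in \Delta : G \cap F' \ne \varnothing\}$, while $S$ is such a ball with interior $\st_\Delta(v)$. The crucial flagness-driven step is to show that $B' \cap S = v \ast C$, where $C := \bigcup_{w \in F'} \overline{\st}_{L_v}(w)$: indeed, $G \in B' \cap S$ means $G \cup \{v\} \in \Delta$ and $G \cup \{w\} \in \Delta$ for some $w \in F'$, and since the edge $\{v,w\} \subseteq F$ lies in $\Delta$, flagness forces $G \cup \{v,w\} \in \Delta$, from which the identity follows by bookkeeping on whether $v \in G$. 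Since $F' \in L_v$ and $L_v$ is a flag homology $(d-2)$-sphere, the inductive hypothesis applied to the pair $(L_v, F')$ shows that $C$ is a homology $(d-2)$-ball with interior $\{G'' \in L_v : G'' \cap F' \ne \varnothing\}$.

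One cannot apply Lemma~\ref{lem:homology}(iv) directly to $B'$ and $S$, because $B' \cap S = v \ast C$ contains $\{v\}$ and so is not contained in $\partial S = L_v$. The remedy is to invoke Lemma~\ref{lem:homology}(v) on $L_v$ and $C$ to obtain $D := L_v \sm \inte(C)$, a homology $(d-2)$-ball with $\partial D = \partial C$. Then $S = (v \ast C) \cup (v \ast D)$, and since $v \ast C = B' \cap S \subseteq B'$, one gets $B' \cup S = B' \cup (v \ast D)$. A short computation yields $B' \cap (v \ast D) = v \ast \partial C$, which is a homology $(d-2)$-ball by Lemma~\ref{lem:homology}(iii); it lies in $\partial(v \ast D) = D \cup (v \ast \partial D)$ trivially, and it lies in $\partial B'$ because every face in $v \ast \partial C$ is disjoint from $F'$ (as $\inte(C)$ consists of the faces of $C$ meeting $F'$) while being contained in $B'$ via the inclusion $v \ast C \subseteq B'$. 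Lemma~\ref{lem:homology}(iv) then gives that $B' \cup (v \ast D) = \bigcup_{w \in F} \overline{\st}_\Delta(w)$ is a homology $(d-1)$-ball, and tracking the interiors of the three pieces $B'$, $v \ast D$, and $v \ast \partial C$ identifies its interior with $\bigcup_{w \in F} \st_\Delta(w)$.

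The main obstacle, and the place where flagness is essential, is the identification $B' \cap S = v \ast C$: without flagness one could not upgrade the pair of conditions ``$G \cup \{v\} \in \Delta$ and $G \cup \{w\} \in \Delta$'' into the single condition ``$G \cup \{v,w\} \in \Delta$''. The secondary obstacle, once that is in hand, is topological: the intersection $B' \cap S$ intrudes into the interior of $S$, so Lemma~\ref{lem:homology}(iv) does not apply to the pair $(B', S)$; carving $S$ along the sphere $\partial C$ via Lemma~\ref{lem:homology}(v) is the device that makes the gluing work.
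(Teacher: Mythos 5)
Your proof is correct and follows essentially the same route as the paper's: induction on $|F|$, peeling off one vertex $v$, forming $C=\bigcup_{w\in F'}\overline{\st}_{\link_\Delta(v)}(w)$ and its complementary ball $D$ in $\link_\Delta(v)$ via Lemma~\ref{lem:homology}(v), and then gluing $B'$ to $v\ast D$ along $v\ast\partial C$ using Lemma~\ref{lem:homology}(iii)--(iv). The only difference is presentational --- you first compute $B'\cap S=v\ast C$ and then carve $S$, whereas the paper directly writes down the refined decomposition $\RK_1\cup\RK_2$ with intersection $\RK_0$ and labels the verification ``straightforward''; your version has the minor virtue of making explicit that flagness is exactly what makes that verification go through.
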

\begin{proof}
We set $F = \{ v_1, v_2,\dots,v_k \}$, $\bigcup_{i=1}^k \overline{\st}_\Delta
(v_i) = \RK$ and $\bigcup_{i=1}^k \st_\Delta (v_i) = \RL$ and proceed by
induction on the cardinality $k$ of $F$. For $k=1$, the complex $\RK$ is the 
cone over $\link_\Delta (v_1)$ on the vertex $v_1$. Since $\Delta$ is a 
homology sphere, the result follows from parts (i) and (iii) of Lemma 
\ref{lem:homology}. Suppose that $k \ge 2$. We will also assume that $d \ge 3$, 
since the result is trivial otherwise (we note that the assumption that $\Delta$ 
is flag is essential in the case $d=2$). Since, by Lemma \ref{lem:homology} (i), 
links of flag homology spheres are also flag homology spheres, the 
complex $\Gamma = \link_\Delta (v_k)$ is a flag homology sphere of dimension 
$d-2$ and $\{ v_1,\dots,v_{k-1} \}$ is a nonempty face of $\Gamma$. Thus, 
by the induction hypothesis, the union $\Gamma_1 = \bigcup_{i=1}^{k-1} 
\overline{\st}_\Gamma (v_i)$ is a homology ball of dimension $d-2$. Let 
$\Gamma_0$ denote the boundary of $\Gamma_1$ and let $\Gamma_2$ denote the 
complement of the interior of $\Gamma_1$ in $\Gamma$. Thus $\Gamma_0$ is a 
homology sphere of dimension $d-3$ and, by part (v) of Lemma 
\ref{lem:homology}, $\Gamma_2$ is a homology ball of dimension $d-2$ whose 
boundary is equal to $\Gamma_0$.

      Consider the union $\RK_1 = \bigcup_{i=1}^{k-1} \overline{\st}_\Delta
      (v_i)$ and the cones $\RK_2 = v_k \ast \Gamma_2$ and $\RK_0 = v_k \ast
      \Gamma_0$. It is straightforward to verify that $\RK = \RK_1 \cup \RK_2$
      and that $\RK_1 \cap \RK_2 = \RK_0$. We note that $\RK_1$ is a homology
      ball of dimension $d-1$, by the induction hypothesis, and that $\RK_2$
      and $\RK_0$ are homology balls of dimension $d-1$ and $d-2$, respectively,
      by part (iii) of Lemma \ref{lem:homology}. By the induction
      hypothesis, the interior of $\Gamma_1$ is equal to $\bigcup_{i=1}^{k-1}
      \st_\Gamma (v_i)$. Therefore, none of the faces of $\Gamma_0$ contains
      any of $v_1,\dots,v_{k-1}$ and hence the same holds for $\RK_0$. Since,
      by the induction hypothesis, the interior of $\RK_1$ is equal to
      $\bigcup_{i=1}^{k-1} \st_\Delta (v_i)$, we conclude that $\RK_0$ is
      contained in the boundary of $\RK_1$. Moreover, $\RK_0$ is also contained
      in the boundary of $\RK_2$, since $\Gamma_0$ is contained in the boundary
      of $\Gamma_2$. It follows from the previous discussion and Lemma
      \ref{lem:homology} (iv) that $\RK$ is a homology $(d-1)$-dimensional
      ball.

      We now verify that the interior of $\RK$ is equal to $\RL$. This
      statement may be derived from the previous inductive argument, since the
      interior of $\RK$ is equal to the union of the interiors of $\RK_1$,
      $\RK_2$ and $\RK_0$. We give the following alternative argument. Since
      $\RK$ is a homology ball, its boundary consists of all faces of the
      $(d-2)$-dimensional faces of $\RK$ which are contained in exactly one
      facet of $\RK$. The validity of the statement for $k=1$ implies that
      these $(d-2)$-dimensional faces of $\RK$ are precisely those which do
      not contain any of the $v_i$ and which are not contained in more than
      one of the subcomplexes $\link_\Delta (v_i)$. However, since $\Delta$ is
      $(d-1)$-dimensional and flag, no $(d-2)$-dimensional face of $\Delta$
      may be contained in more than one of the $\link_\Delta (v_i)$. Thus, the
      boundary of $\RK$ consists precisely of its faces which do not contain
      any of the $v_i$ and the proof follows.
    \end{proof}

\medskip
\noindent
\begin{proof}[Proof of Theorem \ref{thm:main}] Let $\Delta$ be a flag simplicial
complex of dimension $d-1$ and $\Sigma_{d-1}$ be the simplicial join of the
zero-dimensional spheres $\{\varnothing, \{u_i\}, \{v_i\}\}$ for $1 \le i \le d$. 
We fix a facet $\{x_1, x_2,\dots,x_d\}$ of $\Delta$ and for $E \in \Delta$ 
we define
  \begin{equation} \label{eq:mainsigmadef}
    \sigma (E) \ = \ \{u_i: x_i \in E\} \cup \{v_i: E \notin
    \overline{\st}_\Delta (x_i) \}.
  \end{equation}
Clearly, $\sigma(E)$ cannot contain any of the sets $\{u_i, v_i\}$. Thus we have
$\sigma(E) \in \Sigma_{d-1}$ for every $E \in \Delta$ and hence we get a map $\sigma:
\Delta \to \Sigma_{d-1}$. We will prove that this map is a homology subdivision of
$\Sigma_{d-1}$, if $\Delta$ is a homology sphere. Given a face $F \in \Sigma_{d-1}$,
we need to show that $\sigma^{-1} (2^F)$ is a subcomplex of $\Delta$ of dimension
$\dim(F)$ which is a homology ball with interior $\sigma^{-1} (F)$. We denote by $S$
the subset of $\{x_1, x_2,\dots,x_d\}$ consisting of all vertices $x_i$ for which $F
\cap \{u_i, v_i\} = \varnothing$ and distinguish two cases:

\medskip
\noindent {\sf Case 1:} $S = \varnothing$. We may assume that $F =
\{u_1,\dots,u_k\} \cup \{v_{k+1},\dots,v_d\}$ for some $k \le d$.
Setting $E_0 = \{x_1,\dots,x_k\}$, the defining equation
(\ref{eq:mainsigmadef}) shows that $\sigma^{-1} (2^F)$ is equal to
the intersection of $\bigcap_{i=1}^k \overline{\st}_\Delta (x_i)$
with the complement of $\bigcup_{i=k+1}^d \st_\Delta (x_i)$ in
$\Delta$ and that $\sigma^{-1} (F)$ consists of those faces of
$\sigma^{-1} (2^F)$ which contain $E_0$ and do not belong to any
of the $\link_\Delta (x_i)$ for $k+1 \le i \le d$. Consider the
complex $\Gamma = \link_\Delta (E_0)$ and let $\RK$ denote the
complement of the union $\bigcup_{i=k+1}^d \st_\Gamma (x_i)$ in
$\Gamma$. Since links of homology spheres are also homology spheres 
(see part (i) of Lemma \ref{lem:homology}), the complex $\Gamma$ 
is a homology sphere of dimension $d-|F|-1$. By Lemma
\ref{lem:starunion} and part (v) of Lemma \ref{lem:homology},
$\RK$ is a homology ball of dimension $d-|F|-1$ whose interior is
equal to the set of those faces of $\RK$ which do not belong to
any of the $\link_\Gamma (x_i)$ for $k+1 \le i \le d$. From the
above we conclude that $\sigma^{-1} (2^F)$ is equal to the
simplicial join of the simplex $2^{E_0}$ and $\RK$ and that
$\sigma^{-1} (F)$ is equal to the simplicial join of $\{E_0\}$ and
the interior of $\RK$. The result now follows from part (iii) of
Lemma \ref{lem:homology} and the previous discussion.

\medskip
\noindent {\sf Case 2:} $S \ne \varnothing$. Then $\sigma^{-1} (2^F)$ is contained
in $\link_\Delta (S)$. As a result, replacing $\Delta$ by $\link_\Delta (S)$ reduces
this case to the previous one.

\medskip
Finally, we note that (\ref{eq:mainsigmadef}) may be rewritten as $\sigma (E) =
\bigcup_{x \in E} f(x)$, where
  $$ f(x) \, = \, \begin{cases} \{u_i\}, & \text{if \ $x=x_i$ for some $1 \le i \le d$}
     \\ \{ v_i: x \notin \link_\Delta (x_i) \}, & \text{otherwise} \end{cases} $$
for every vertex $x$ of $\Delta$. This implies that for every $E \in \Delta$, the
carrier of $E$ is equal to the union of the carriers of the vertices of $E$. As a
result, $\sigma$ is vertex-induced and the proof follows.
\end{proof}

    \begin{corollary} \label{cor:main1}
      Given any flag homology sphere $\Delta$ of dimension $d-1$, there exist
      simplicial complexes $\Gamma_F$, one for each face $F \in \Sigma_{d-1}$,
      with the following properties: {\rm (a)} $\Gamma_F$ is a flag vertex-induced
      homology subdivision of the simplex $2^F$ for every $F \in \Sigma_{d-1}$;
      and {\rm (b)} we have
        \begin{equation} \label{eq:corPL}
          h (\Delta, x) \ = \ \sum_{F \in \Sigma_{d-1}} \,
          \ell_F (\Gamma_F, x) \, (1+x)^{d - |F|}.
        \end{equation}
    \end{corollary}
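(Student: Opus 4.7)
The plan is to take the $\Gamma_F$ to be the restrictions of the subdivision produced by Theorem \ref{thm:main}, and then read off (\ref{eq:corPL}) from the local $h$-vector expansion of Theorem \ref{thm:stalocal}(i). First I would invoke Theorem \ref{thm:main} to obtain a flag vertex-induced homology subdivision $\sigma\colon \Delta \to \Sigma_{d-1}$, and for each $F \in \Sigma_{d-1}$ set $\Gamma_F := \Delta_F = \sigma^{-1}(2^F)$. By definition of a homology subdivision, $\Gamma_F$ is automatically a homology subdivision of $2^F$.

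To verify property (a), I would check that the restriction $\Gamma_F \to 2^F$ inherits the vertex-induced and flag properties from $\sigma$. For vertex-inducedness, given any $G \subseteq F$ we have $(\Delta_F)_G = \Delta_G$ (because $2^G \subseteq 2^F$), so the vertex-induced condition for $\sigma$ at a pair $(E,G)$ with $E \in \Delta_F$ transfers verbatim to the restriction. For flagness, the observation at the end of Subsection \ref{subsec:sub} says that any vertex-induced homology subdivision whose source is flag is automatically a flag subdivision; since $\Delta$ itself is flag, every restriction $\Delta_F$ is a flag complex, and hence $\Gamma_F \to 2^F$ is a flag subdivision.

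For property (b), I would apply the formula (\ref{eq:hformula}) of Theorem \ref{thm:stalocal}(i) to $\sigma\colon \Delta \to \Sigma_{d-1}$, which gives
$$ h(\Delta, x) \ = \ \sum_{F \in \Sigma_{d-1}} \, \ell_F(\Gamma_F, x) \, h(\link_{\Sigma_{d-1}}(F), x). $$
The remaining input is the identity $h(\link_{\Sigma_{d-1}}(F), x) = (1+x)^{d-|F|}$. Since $\Sigma_{d-1}$ is the simplicial join of the $d$ zero-dimensional spheres $\{\varnothing, \{u_i\}, \{v_i\}\}$, the link of $F$ in $\Sigma_{d-1}$ is the join of those $d-|F|$ zero-spheres for which $F \cap \{u_i, v_i\} = \varnothing$; since each two-point zero-sphere has $h$-polynomial $1+x$, the multiplicativity of the $h$-polynomial under simplicial join (recorded in Subsection \ref{subsec:h}) yields $(1+x)^{d-|F|}$. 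Substituting into the display above gives (\ref{eq:corPL}).

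No essential obstacle arises; the corollary is a formal consequence of Theorem \ref{thm:main} and Stanley's local $h$-identity, once one observes that the $h$-polynomial of every link in $\Sigma_{d-1}$ factors cleanly as $(1+x)^{d-|F|}$. The only item that demands any care at all is the passage of the vertex-induced and flag properties to restrictions, which is disposed of in the second paragraph above.
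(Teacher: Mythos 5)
Your proposal is correct and follows the same route the paper takes: apply the local $h$-vector identity (\ref{eq:hformula}) to the flag vertex-induced subdivision $\sigma\colon \Delta \to \Sigma_{d-1}$ furnished by Theorem \ref{thm:main}, take $\Gamma_F = \Delta_F$, and use that each $\link_{\Sigma_{d-1}}(F)$ is a join of $d-|F|$ zero-spheres with $h$-polynomial $(1+x)^{d-|F|}$. The paper states this tersely; your extra care in checking that the restrictions $\Gamma_F$ inherit the vertex-induced and flag properties fills in exactly the detail the paper leaves to the reader.
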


    \begin{proof}
      This statement follows by applying (\ref{eq:hformula}) to the subdivision of
      $\Delta$ guaranteed by Theorem \ref{thm:main} and noting that for every $F \in
      \Sigma_{d-1}$, the restriction $\Gamma_F$ of this subdivision to $F$ has the
      required properties and that $h (\link_{\Sigma_{d-1}} (F), x) = (1+x)^{d - |F|}$.
    \end{proof}

    \begin{remark} \label{rem:hvecmin}
      It follows from (\ref{eq:corPL}) and Theorem \ref{thm:stalocal} (iii) that
      $h(\Delta, x) \ge (1+x)^d$ for every flag $(d-1)$-dimensional homology sphere
      $\Delta$. This inequality was proved, more generally, for every flag
      $(d-1)$-dimensional doubly Cohen-Macaulay simplicial complex $\Delta$ in
      \cite[Theorem 1.3]{Ath11}.
      \qed
    \end{remark}

We now fix a $d$-element set $V = \{v_1, v_2,\dots,v_d\}$ and a homology
subdivision $\Gamma$ of $2^V$, with subdivision map $\sigma: \Gamma \to 2^V$.
We let $U = \{u_1, u_2,\dots,u_d\}$ be a $d$-element set which is disjoint from
$V$ and consider the union $\Delta$ of all collections of the form
$2^E \ast \Gamma_G$, where $E = \{ u_i: i \in I\}$ and $G = \{ v_j: j \in J\}$ are
subsets of $U$ and $V$, respectively, and $(I, J)$ ranges over all ordered pairs
of disjoint subsets of $\{1, 2,\dots,d\}$. Clearly, $\Delta$ is a simplicial complex
which contains as a subcomplex $\Gamma$ (set $I = \varnothing$) and the simplex
$2^U$ (set $J = \varnothing$).

We let $\Sigma_{d-1}$ be as in the proof of Theorem \ref{thm:main} and define the
map $\sigma_0: \Delta \to \Sigma_{d-1}$ by $\sigma_0 (E \cup F) = E \cup \sigma(F)$
for all $E \subseteq U$ and $F \in \Gamma$ such that $E \cup F \in \Delta$. The
second result of this section is as follows.

\begin{proposition} \label{prop:balltosphere}
Under the established assumptions and notation, the following hold:
  \begin{itemize}
  \item[(i)]
  The complex $\Delta$ is a $(d-1)$-dimensional homology sphere.
  \item[(ii)]
  Endowed with the map $\sigma_0$, the complex $\Delta$ is a homology subdivision
  of $\Sigma_{d-1}$.
  \item[(iii)]
  If $\Gamma$ is flag and vertex-induced, then $\Delta$ is a flag simplicial
  complex and a flag, vertex-induced homology subdivision of $\Sigma_{d-1}$.
  \end{itemize}
\end{proposition}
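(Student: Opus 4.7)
The plan is to verify (ii) by direct computation from the construction of $\Delta$ and $\sigma_0$, to deduce (iii) from the flag and vertex-induced hypotheses on $\Gamma$ by a clique analysis, and to prove (i) by induction on $d$ combined with the gluing provided by Lemma \ref{lem:homology}(iv). Throughout I write $E_I = \{u_i : i \in I\}$ and $V_I = \{v_j : j \notin I\}$ for $I \subseteq \{1,2,\dots,d\}$.

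For (ii), given $F \in \Sigma_{d-1}$, write $F = E_I \cup G$ with $E_I = F \cap U$ and $G = F \cap V$. Directly from the definition of $\sigma_0$ one reads off that $\sigma_0^{-1}(2^F) = 2^{E_I} \ast \Gamma_G$ and $\sigma_0^{-1}(F) = \{E_I\} \ast \inte(\Gamma_G)$. By Remark \ref{rem:homjoin}, the former is a homology ball of dimension $|I| + |G| - 1 = \dim F$ whose interior is the latter, so Definition \ref{def:sub} is verified. For (iii), first note that an edge $\{u_i, w\}$ with $w$ a vertex of $\Gamma$ belongs to $\Delta$ if and only if $v_i \notin \sigma(w)$. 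Consider a clique $K = K_U \sqcup K_\Gamma$ in the 1-skeleton of $\Delta$ and set $I_K = \{i : u_i \in K_U\}$; the previous observation forces every $w \in K_\Gamma$ to be a vertex of $\Gamma_{V_{I_K}}$. The flag and vertex-induced hypotheses on $\Gamma$ then give $K_\Gamma \in \Gamma_{V_{I_K}}$, so $K \in 2^{E_{I_K}} \ast \Gamma_{V_{I_K}} \subseteq \Delta$, proving the flagness of $\Delta$. The vertex-induced property of $\sigma_0$ and the flagness of each restriction $\Delta_F = 2^{E_I} \ast \Gamma_G$ follow from the corresponding properties of $\Gamma$ together with Lemma \ref{lem:joinlink}.

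For (i), I would decompose $\Delta = \Gamma \cup \Delta'$, where $\Delta' = \bigcup_{I \ne \emptyset} 2^{E_I} \ast \Gamma_{V_I}$ is the subcomplex of faces using at least one element of $U$, and check that $\Gamma \cap \Delta' = \bigcup_{G \subsetneq V} \Gamma_G = \partial \Gamma$, a $(d-2)$-dimensional homology sphere. By Lemma \ref{lem:homology}(iv) it suffices to show that $\Delta'$ is a $(d-1)$-dimensional homology ball with boundary $\partial \Gamma$. I would argue this by induction on $d$. The link $\link_\Delta(u_i)$ coincides with the analogous $\Delta$-construction applied to the restriction $\Gamma_{V \sm \{v_i\}}$ (a homology subdivision of $2^{V \sm \{v_i\}}$) with $U$-side $U \sm \{u_i\}$, so by the inductive hypothesis $\link_\Delta(u_i)$ is a homology $(d-2)$-sphere; then $\overline{\st}_\Delta(u_i) = u_i \ast \link_\Delta(u_i)$ is a homology $(d-1)$-ball by Lemma \ref{lem:homology}(iii). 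Writing $\Delta' = \bigcup_{i=1}^d \overline{\st}_\Delta(u_i)$, a secondary induction on the number of stars, patterned on the proof of Lemma \ref{lem:starunion}, should yield the required conclusion.

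The main obstacle lies in this secondary induction. Lemma \ref{lem:starunion} uses flagness of the ambient sphere to guarantee that no $(d-2)$-face belongs to more than one of the relevant vertex links, and this hypothesis is not available here because $\Delta$ is not assumed flag for (i). Its role must be taken up by the structural fact $2^U \subseteq \Delta$ (so that every pair $\{u_i, u_k\}$ is an edge of $\Delta$) together with a careful accounting of the carriers of the $\Gamma$-vertices that can occur in the $(d-2)$-faces in question.
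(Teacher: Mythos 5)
Your treatment of parts (ii) and (iii) is correct and essentially matches the paper; the explicit clique analysis in (iii) just spells out what the paper compresses into the phrase ``by definition of $\Delta$''.

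For part (i) there is a genuine gap, one you acknowledge but do not close. The decomposition $\Delta=\Gamma\cup\Delta'$ with $\Gamma\cap\Delta'=\partial\Gamma$ is fine, and so is the outer induction identifying $\link_\Delta(u_i)$ with the analogous construction for $\Gamma_{V\sm\{v_i\}}$. But the ``secondary induction'' showing that $\Delta'=\bigcup_{i=1}^d\overline{\st}_\Delta(u_i)$ is a homology ball is only declared to be ``patterned on'' Lemma \ref{lem:starunion}; it is not carried out, and Lemma \ref{lem:starunion} itself cannot be invoked: its hypotheses are that the ambient complex is already a flag homology sphere, whereas here $\Delta$ being a sphere is precisely what is to be proved, and in part (i) $\Gamma$ (hence $\Delta$) is not assumed flag. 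You correctly identify that the role of flagness in that lemma (that no $(d-2)$-face lies in two distinct links $\link_\Delta(u_i)$) must be replaced by something coming from the explicit join structure of $\Delta$ --- indeed one can check that a $(d-2)$-face of $\Delta$ disjoint from $U$ lying in $\link_\Delta(u_i)$ must be a facet of $\Gamma_{V\sm\{v_i\}}$, so it cannot lie in $\link_\Delta(u_j)$ for $j\ne i$ --- but none of this, nor the rest of the two-layer induction, is actually supplied. As written, part (i) is incomplete.

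The paper's argument for (i) avoids all of this and is considerably shorter. It orders the facets $F_0,\dots,F_m$ of $\Sigma_{d-1}$ so that $F_i\cap U\subsetneq F_j\cap U$ forces $i<j$ (hence $F_0=V$, $F_m=U$) and shows by induction on $j$ that $\bigcup_{i\le j}\Delta_{F_i}$ is a $(d-1)$-dimensional homology ball for $j<m$ and a sphere for $j=m$. The point is that each $\Delta_{F_j}$ is a join $2^{E_j}\ast\Gamma_{G_j}$ (a ball by Lemma \ref{lem:homology}(iii)) and its intersection with $\bigcup_{i<j}\Delta_{F_i}$ is exactly $\partial(2^{E_j})\ast\Gamma_{G_j}$, so one application of Lemma \ref{lem:homology}(iv) per step finishes the argument. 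No analogue of Lemma \ref{lem:starunion} is needed, and the flagness issue you flag never arises. I recommend replacing your plan for (i) with this shelling over the facets of $\Sigma_{d-1}$.
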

\begin{proof}
We first verify (ii). We consider any face $W \in \Sigma_{d-1}$,
so that $W = E \cup G$ for some $E \subseteq U$ and $G \subseteq
V$, and recall that $\Gamma_G$ is a homology ball of dimension
$\dim(G)$. By definition of $\sigma_0$ we have $\sigma_0^{-1}
(2^W) = 2^E \ast \Gamma_G$ and $\sigma_0^{-1} (W) = \{E\} \ast
\sigma^{-1} (G) = \{E\} \ast \inte (\Gamma_G)$. Thus, it follows
from part (iii) of Lemma \ref{lem:homology} that $\sigma_0^{-1}
(2^W)$ is a homology ball of dimension $\dim(W)$ and that its
interior is equal to $\sigma_0^{-1} (W)$.

Part (i) may be deduced from part (ii) as follows. Let $F_0,
F_1,\dots,F_m$ be a linear ordering of the facets of
$\Sigma_{d-1}$ such that $F_i \cap U \subset F_j \cap U$ implies
$i < j$. Thus we have $m = 2^d$, $F_0 = V$ and $F_m = U$. By
assumption, $\Delta_{F_0} = \Gamma_V$ is a $(d-1)$-dimensional
homology ball. Moreover, $\Delta_{F_j}$ is equal to the simplicial
join of a face of $2^U$ with the restriction of $\Gamma$ to a face
of $2^V$, for $1 \le j \le m$, and hence a $(d-1)$-dimensional
homology ball by part (iii) of Lemma \ref{lem:homology}, and
$\Delta_{F_j} \cap \bigcup_{i=0}^{j-1} \Delta_{F_i}$ is equal to
the simplicial join of the boundary of this face with the same
restriction of $\Gamma$. It follows from part (iv) of Lemma
\ref{lem:homology} by induction on $j$ that $\bigcup_{i=0}^j
\Delta_{F_i}$ is a $(d-1)$-dimensional homology ball, for $0 \le j
\le m-1$ and a $(d-1)$-dimensional homology sphere, for $j = m$.
This proves (i), since $\Delta = \bigcup_{i=0}^m \Delta_{F_i}$.

To verify (iii), assume that $\Gamma$ is flag and vertex-induced. It is clear from
the definition of $\sigma_0$ that the subdivision $\Delta$ is also vertex-induced.
Since the restriction of $\Delta$ to any face of $\Sigma_{d-1}$ is the join of a
simplex with the restriction of $\Gamma$ to a face of $2^V$, the
subdivision $\Delta$ is flag as well. To verify that $\Delta$ is a flag complex,
let $E \cup F$ be a set of vertices of $\Delta$ which are pairwise joined
by edges, where $E = \{ u_i: i \in I \}$ for some $I \subseteq \{1, 2,\dots,d\}$
and $F$ consists of vertices of $\Gamma$. We need to show that $E \cup F \in
\Delta$. We set $J = \{1, 2,\dots,d\} \sm I$ and $G = \{ v_j: j \in J\}$ and note
that the elements of $F$ are vertices of $\Gamma_G$, by definition of $\Delta$.
Since the elements of $F$ are pairwise joined by edges in $\Gamma$, our assumptions
that $\Gamma$ is vertex-induced and flag imply that $F \in \Gamma_G$. Therefore
$E \cup F$ belongs to $2^E \ast \Gamma_G$, which is contained in $\Delta$, and
the result follows.
\end{proof}

\begin{remark} \label{rem:vineeded}
The conclusion in Proposition \ref{prop:balltosphere} that $\Delta$ is a flag
complex does not hold under the weaker hypothesis that $\Gamma$ is quasi-geometric,
rather than vertex-induced. For instance, let $\Gamma$ be the simplicial complex
consisting of the subsets of $V = \{v_1, v_2, v_3\}$ and $\{v_2, v_3, v_4\}$ and
let $\sigma: \Gamma \to 2^V$ be the subdivision which pushes $\Gamma$ into $2^V$,
so that the face $F = \{v_2, v_3\}$ of $\Gamma$ ends up in the interior of $2^V$ and
$v_4$ ends up in the interior of $2^F$. Then $\Gamma$ is quasi-geometric and flag
but the simplicial complex $\Delta$ is not flag, since it has $\{u_1, v_2, v_3\}$
as a minimal non-face.
\qed
\end{remark}

  \section{Local $\gamma$-vectors}
  \label{sec:gamma}

  This section defines the local
  $\gamma$-vector of a homology subdivision of the simplex, lists
  examples and elementary properties, discusses its nonnegativity in the special
  case of flag subdivisions and concludes with the proof of Theorem \ref{thm:34}.
  This proof comes as an application of the considerations and results of the
  present and the previous section.

  \begin{definition} \label{def:localgamma}
      Let $V$ be a set with $d$ elements and let $\Gamma$ be a homology subdivision
      of the simplex $2^V$. The polynomial $\xi_V (\Gamma, x) = \xi_0 + \xi_1 x +
      \cdots + \xi_{\lfloor d/2 \rfloor} x^{\lfloor d/2 \rfloor}$ defined by
        \begin{equation} \label{eq:defxi}
          \ell_V (\Gamma, x) \ = \ (1+x)^d \ \xi_V \left( \Gamma, \frac{x}{(1+x)^2}
          \right) \, = \, \sum_{i=0}^{\lfloor d/2 \rfloor} \, \xi_i x^i (1+x)^{d-2i}
        \end{equation}
      is the \emph{local $\gamma$-polynomial} of $\Gamma$ (with respect to $V$).
      The sequence $\xi_V (\Gamma) = (\xi_0, \xi_1,\dots,\xi_{\lfloor d/2 \rfloor})$
      is the \emph{local $\gamma$-vector} of $\Gamma$ (with respect to $V$).
    \end{definition}

  Thus $\xi_V (\Gamma, x)$ is the $\gamma$-polynomial associated to $\ell_V (\Gamma,
  x)$ and $\xi_V (\Gamma)$ is the $\gamma$-vector associated to $\ell_V (\Gamma)$,
  in the sense of Section \ref{subsec:gamma}.
  All formulas in the next example follow from corresponding formulas in \cite[Example
  2.3]{Sta92}, or directly from the relevant definitions.

  \begin{example} \label{ex:localgamma}
    (a) For the trivial subdivision $\Gamma = 2^V$ of the $(d-1)$-dimensional simplex
    $2^V$ we have
      \begin{equation} \label{eq:extrivial}
        \xi_V (\Gamma, x) \ = \ \begin{cases}
        1, & \text{if \ $d=0$} \\
        0, & \text{if \ $d \ge 1$}. \end{cases}
      \end{equation}

    (b) Let $\xi_V (\Gamma) = (\xi_0, \xi_1,\dots,\xi_{\lfloor d/2 \rfloor})$, where
    $\Gamma$ and $V$ are as in Definition \ref{def:localgamma}. Assuming that $d \ge 1$,
    we have $\xi_0 = 0$ and $\xi_1 = f_0^\circ$, where $f_0^\circ$ is the number of
    interior vertices of $\Gamma$. Assuming that $d \ge 4$, we also have $\xi_2 =
    -(2d-3) f_0^\circ + f_1^\circ - \tilde{f}_0$, where $f_1^\circ$ is the number of
    interior edges of $\Gamma$ and $\tilde{f}_0$ is the number of vertices of $\Gamma$
    which lie in the relative interior of a $(d-2)$-dimensional face of $2^V$.

    (c) Suppose that $d \in \{2, 3\}$. As a consequence of (b) we have $\xi_V (\Gamma,
    x) = tx$ for every homology subdivision $\Gamma$ of $2^V$, where $t$ is the number
    of interior vertices of $\Gamma$.

    (d) Let $\Gamma$ be the cone over the boundary $2^V \sm \{V\}$ of the simplex $2^V$
    (so $\Gamma$ is the stellar subdivision of $2^V$ on the face $V$). Then
    $\ell_V (\Gamma, x) = x + x^2 + \cdots + x^{d-1}$ and hence $\xi_2$ is negative for
    $d \ge 4$. For instance, we have $\xi_V (\Gamma, x) = x - x^2$ for $d=4$.

    (e) For the subdivisions of parts (b) and (c) of Example \ref{ex:nonflagrest} we
    can compute that $\ell_V (\Gamma', x) = \ell_V (\Gamma'', x) = x + x^3$ and hence
    that $\xi_V (\Gamma', x) = \xi_V (\Gamma'', x) = x - 2x^2$. \qed
  \end{example}

 The following proposition shows the relevance of local $\gamma$-vectors in the study
 of $\gamma$-vectors of subdivisions of Eulerian complexes.

\begin{proposition} \label{prop:gammaformula}
Let $\Delta$ be an Eulerian simplicial complex. For every homology subdivision $\Delta'$
of $\Delta$ we have
  \begin{equation} \label{eq:gammaformula}
    \gamma (\Delta', x) \ = \ \sum_{F \in \Delta} \,
    \xi_F (\Delta'_F, x) \, \gamma (\link_\Delta (F), x).
  \end{equation}
\end{proposition}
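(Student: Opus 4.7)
The identity should follow by a direct substitution into Stanley's formula (\ref{eq:hformula}), combined with uniqueness of the $\gamma$-polynomial associated to a symmetric $h$-polynomial. The plan is to rewrite both $\ell_F(\Delta'_F, x)$ and $h(\link_\Delta (F), x)$ via their $\gamma$-expansions, and then read off the claim by comparing coefficients of $(1+x)^d$.

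First I would recall that, since $\Delta$ is Eulerian, the link of every face $F \in \Delta$ is also Eulerian; indeed, for $E \in \link_\Delta(F)$ one has $\link_{\link_\Delta(F)}(E) = \link_\Delta(E \cup F)$, and so the Eulerian condition at $E$ in $\link_\Delta(F)$ reduces to the Eulerian condition at $E \cup F$ in $\Delta$. Consequently $h(\link_\Delta(F), x)$ is symmetric of degree at most $d - |F|$, and admits the expansion
\[ h(\link_\Delta(F), x) \ = \ (1+x)^{d-|F|} \, \gamma\!\left( \link_\Delta(F),\, \tfrac{x}{(1+x)^2}\right). \]
Theorem \ref{thm:stalocal}(ii) similarly gives that $\ell_F(\Delta'_F, x)$ is symmetric of degree at most $|F|$, and Definition \ref{def:localgamma} then supplies the analogous expansion
\[ \ell_F(\Delta'_F, x) \ = \ (1+x)^{|F|} \, \xi_F\!\left(\Delta'_F,\, \tfrac{x}{(1+x)^2}\right). \]

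Next I would substitute these two identities into (\ref{eq:hformula}) and pull out the common factor $(1+x)^d$ from each summand. Writing $y = x/(1+x)^2$, this yields
\[ h(\Delta', x) \ = \ (1+x)^d \sum_{F \in \Delta} \xi_F(\Delta'_F, y) \, \gamma(\link_\Delta(F), y). \]
Each summand on the right is a polynomial in $y$ of degree at most $\lfloor |F|/2 \rfloor + \lfloor (d-|F|)/2 \rfloor \le \lfloor d/2 \rfloor$, so the whole right-hand side has the form $(1+x)^d \, g(x/(1+x)^2)$ for some polynomial $g$ of degree at most $\lfloor d/2 \rfloor$. By the equivalence recorded in (\ref{eq:defgamma}), this both forces $h(\Delta', x)$ to be symmetric of degree at most $d$ and identifies $g$ as its associated $\gamma$-polynomial. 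Invoking uniqueness of the $\gamma$-polynomial (\cite[Proposition 2.1.1]{Ga05}), we conclude $\gamma(\Delta', x) = g(x)$, which is exactly (\ref{eq:gammaformula}).

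I do not expect any real obstacle here: the whole argument is a change of variables followed by an appeal to uniqueness. The only point requiring a moment's care is that $\gamma(\Delta', x)$ is a priori defined only when $\Delta'$ is Eulerian, i.e., when $h(\Delta', x)$ is symmetric; but as emphasized above, this symmetry emerges automatically as a by-product of the rewriting, so no separate verification that $\Delta'$ itself is Eulerian is needed for the formula to make sense.
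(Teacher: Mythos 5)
Your proof is correct and takes essentially the same route as the paper: expand $h(\link_\Delta(F),x)$ and $\ell_F(\Delta'_F,x)$ via (\ref{eq:defgamma}) and (\ref{eq:defxi}), substitute into (\ref{eq:hformula}), pull out $(1+x)^d$, and invoke uniqueness of the $\gamma$-polynomial. Your added remarks (the degree bound $\lfloor |F|/2 \rfloor + \lfloor (d-|F|)/2 \rfloor \le \lfloor d/2 \rfloor$ and the observation that symmetry of $h(\Delta',x)$ emerges automatically rather than needing $\Delta'$ to be verified Eulerian in advance) are accurate and slightly more careful than the paper's terse presentation.
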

\begin{proof}
Since $\Delta$ is Eulerian, so is $\link_\Delta (F)$ for every $F \in \Delta$.
Thus, applying (\ref{eq:defgamma}) to the $h$-polynomial of $\link_\Delta (F)$
we get
  $$ h (\link_\Delta (F), x) \ = \ (1+x)^{d-|F|} \ \gamma \left( \link_\Delta (F),
     \frac{x}{(1+x)^2} \right), $$
where $d-1 = \dim(\Delta)$. Using this and (\ref{eq:defxi}), Equation (\ref{eq:hformula})
may be rewritten as
  $$ h (\Delta', x) \ = \ (1+x)^d \ \sum_{F \in \Delta} \, \xi_F \left( \Delta'_F,
     \frac{x}{(1+x)^2} \right) \, \gamma \left( \link_\Delta (F), \frac{x}{(1+x)^2}
     \right). $$
The proposed equality now follows from the uniqueness of the $\gamma$-polynomial
associated to $h (\Delta', x)$.
\end{proof}

The following statement is the main conjecture of this paper.

\begin{conjecture} \label{conj:main}
For every flag vertex-induced homology subdivision $\Gamma$ of the simplex $2^V$
we have $\xi_V (\Gamma) \ge 0$.
\end{conjecture}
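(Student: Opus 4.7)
The plan is to reduce Conjecture~\ref{conj:main} to the Davis--Okun theorem (and, more broadly, to Gal's conjecture) by means of the embedding furnished by Proposition~\ref{prop:balltosphere}. Given a flag vertex-induced homology subdivision $\Gamma$ of $2^V$, that construction realizes $\Gamma$ as the restriction to the facet $V$ of a flag vertex-induced homology subdivision $\sigma_0 : \Delta \to \Sigma_{d-1}$, with $\Delta$ a flag homology $(d-1)$-sphere.

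The first step is to apply Proposition~\ref{prop:gammaformula} to this subdivision. Because each link in $\Sigma_{d-1}$ is itself a cross-polytope boundary, and hence has $\gamma$-polynomial identically equal to $1$, the formula collapses to
\[
\gamma(\Delta, x) \ = \ \sum_{F \in \Sigma_{d-1}} \xi_F(\Delta_F, x).
\]
Writing each $F \in \Sigma_{d-1}$ as $E \cup G$ with $E \subseteq U$ and $G \subseteq V$, the restriction $\Delta_F$ equals $2^E \ast \Gamma_G$. The multiplicativity of local $h$-polynomials under simplicial joins \cite{Sta92}, together with Example~\ref{ex:localgamma}(a), forces $\xi_F(\Delta_F, x) = 0$ whenever $E \ne \varnothing$, so the sum collapses to the key identity
\[
\gamma(\Delta, x) \ = \ 1 \, + \sum_{\varnothing \neq G \subseteq V} \xi_G(\Gamma_G, x).
\]

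Having set this up, I would extract the coefficients of $\xi_V(\Gamma, x)$ one at a time, using Gal's conjecture for $\Delta$ together with the low-dimensional information about the smaller restrictions $\Gamma_G$ provided by Example~\ref{ex:localgamma}. For $d \leq 3$ the result is immediate from part (c) of that example. For $d = 4$, parts (b) and (c) of Example~\ref{ex:localgamma} show that $\xi_G(\Gamma_G, x)$ is a nonnegative multiple of $x$ whenever $|G| \leq 3$; comparing coefficients of $x^2$ in the key identity then yields the clean equality $\xi_2 = \gamma_2(\Delta)$, and the Davis--Okun theorem forces $\gamma_2(\Delta) \geq 0$. Together with the nonnegativity $\xi_1 = \ell_1 \geq 0$ provided by Theorem~\ref{thm:stalocal}(iii), this would settle the conjecture in dimension three.

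The chief obstacle to pushing beyond $d = 4$ is that this extraction argument does not fuel an induction on $d$. Rewriting the key identity as
\[
\xi_V(\Gamma, x) \ = \ \gamma(\Delta, x) \, - \, 1 \, - \sum_{\varnothing \neq G \subsetneq V} \xi_G(\Gamma_G, x),
\]
an inductive nonnegativity hypothesis on the smaller $\xi_G(\Gamma_G, x)$ yields only an \emph{upper} bound on $\xi_V(\Gamma, x)$. Overcoming this would seem to require either a cleverer enveloping flag sphere $\Delta$, engineered so that the proper-restriction terms cancel favourably, or a genuinely new source of lower bounds on $\xi_V$---for instance a direct combinatorial interpretation of $\xi_V(\Gamma, x)$ that exploits the flag, vertex-induced hypothesis, in the spirit of the descent-statistic interpretations of $\gamma$-polynomials of barycentric subdivisions.
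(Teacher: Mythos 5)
This statement is a \emph{conjecture} in the paper, not a theorem; no complete proof is given there, and you correctly refrain from claiming one. What the paper does establish is Proposition~\ref{prop:main3d} (Conjecture~\ref{conj:main} for subdivisions of the 3-dimensional simplex, i.e.\ $d \le 4$), plus the edge-subdivision case in Proposition~\ref{prop:edgesub}. Your outline for $d \le 4$ reproduces the paper's argument for Proposition~\ref{prop:main3d} essentially verbatim: embed $\Gamma$ as the restriction $\Delta_V$ of a flag homology $(d-1)$-sphere $\Delta$ via Proposition~\ref{prop:balltosphere}, apply \eqref{eq:gammaformula} together with $\gamma(\link_{\Sigma_{d-1}}(F),x)=1$, kill the terms with $F \not\subseteq V$ because $\Delta_F = 2^E * \Gamma_G$ with $E \neq \varnothing$ is a cone (you get this from multiplicativity of local $h$ and $\ell_E(2^E,x)=0$; the paper cites \cite[p.~821]{Sta92} directly --- same fact), and then for $d=4$ extract $\xi_2 = \gamma_2(\Delta) \ge 0$ from Davis--Okun. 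Your observation that $\xi_1 = \ell_1 \ge 0$ via Theorem~\ref{thm:stalocal}(iii) is a valid alternative to the paper's $\xi_1 = f_0^\circ$ from Example~\ref{ex:localgamma}(b); both are fine since $\xi_0 = \ell_0 = 0$ for $d \ge 1$.

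Your diagnosis of why this does not extend is also exactly right, and worth emphasizing: the identity
\[
\xi_V(\Gamma,x) \;=\; \gamma(\Delta,x) \;-\; 1 \;-\; \sum_{\varnothing \neq G \subsetneq V} \xi_G(\Gamma_G,x)
\]
only turns nonnegativity of the lower-dimensional $\xi_G$ and of $\gamma(\Delta,x)$ into an \emph{upper} bound on $\xi_V(\Gamma,x)$, so even assuming Gal's conjecture in full one gets no lower bound on the top coefficient $\xi_{\lfloor d/2 \rfloor}$ once $d \ge 5$ (where multiple $G$'s contribute to it). The $d=4$ case is saved only by the accident that $F=V$ is the unique contributor to the $x^2$ coefficient. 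So: your account is faithful to the paper, and you correctly identify that Conjecture~\ref{conj:main} remains open beyond $d=4$. If you want to log the paper's remaining positive evidence, it is Proposition~\ref{prop:edgesub} (local $\gamma$-nonnegativity for iterated edge subdivisions of the simplex), proved by a separate inductive argument using the relative local $h$-polynomials of Proposition~\ref{prop:localformula}; that mechanism is disjoint from the embedding strategy you describe.
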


Parts (d) and (e) of Example \ref{ex:localgamma} show that the conclusion of Conjecture
\ref{conj:main} fails under various weakenings of the hypotheses. We do not know of an
example of a flag quasi-geometric homology subdivision of the simplex for which the local
$\gamma$-vector fails to be nonnegative.

We now discuss some consequences of Theorem \ref{thm:main} and Proposition
\ref{prop:gammaformula}, related to Conjecture \ref{conj:main}.

\begin{corollary} \label{cor:main12}
For every flag homology sphere $\Delta$ of dimension $d-1$ we have
  \begin{equation} \label{eq:corPL2}
    \gamma (\Delta, x) \ = \ \sum_{F \in \Sigma_{d-1}} \, \xi_F (\Gamma_F, x),
  \end{equation}
where $\Gamma_F$ is as in Corollary \ref{cor:main1} for each $F \in \Sigma_{d-1}$.
In particular, the validity of Conjecture \ref{conj:main} for homology subdivisions
$\Gamma$ of dimension at most $d-1$ implies the validity of Conjecture
\ref{conj:gal} for homology spheres $\Delta$ of dimension at most $d-1$.
\end{corollary}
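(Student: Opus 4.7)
The plan is to combine Theorem \ref{thm:main} with the general $\gamma$-formula of Proposition \ref{prop:gammaformula}, applied to the subdivision $\sigma: \Delta \to \Sigma_{d-1}$ produced by Theorem \ref{thm:main}. First, I would note that $\Sigma_{d-1}$, being a homology sphere, is Eulerian, so Proposition \ref{prop:gammaformula} applies and yields
\begin{equation*}
  \gamma(\Delta, x) \ = \ \sum_{F \in \Sigma_{d-1}} \, \xi_F(\Gamma_F, x) \, \gamma(\link_{\Sigma_{d-1}}(F), x),
\end{equation*}
where $\Gamma_F = \Delta_F$ is the restriction of $\sigma$ to $F$ (so $\Gamma_F$ is the same complex as in Corollary \ref{cor:main1}).

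Next, I would compute $\gamma(\link_{\Sigma_{d-1}}(F), x)$ for any $F \in \Sigma_{d-1}$. Since $\link_{\Sigma_{d-1}}(F)$ is itself a join of $d - |F|$ copies of the zero-dimensional sphere, its $h$-polynomial equals $(1+x)^{d-|F|}$ (as already used in the proof of Corollary \ref{cor:main1}). Comparing with the defining identity (\ref{eq:defgamma}) of the $\gamma$-polynomial, we see that $\gamma(\link_{\Sigma_{d-1}}(F), x) = 1$. Substituting this into the displayed formula gives exactly (\ref{eq:corPL2}).

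For the final implication, I would simply observe that each $\Gamma_F$ is, by Corollary \ref{cor:main1}, a flag vertex-induced homology subdivision of the simplex $2^F$, and its dimension $|F|-1$ is at most $d-1$. So if Conjecture \ref{conj:main} is granted in all dimensions up to $d-1$, every term $\xi_F(\Gamma_F, x)$ on the right-hand side of (\ref{eq:corPL2}) has nonnegative coefficients, whence so does $\gamma(\Delta, x)$, which is Conjecture \ref{conj:gal} for $\Delta$. There is no real obstacle here; the argument is a direct unwinding of the identities already in place, and the only small point that needs verifying is the $\gamma$-polynomial of a link in $\Sigma_{d-1}$, which comes down to the elementary observation that the $\gamma$-polynomial of $(1+x)^n$ is the constant $1$.
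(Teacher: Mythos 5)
Your proof is correct and coincides with the second ("alternative") route the paper itself sketches: apply Proposition \ref{prop:gammaformula} to the subdivision $\sigma: \Delta \to \Sigma_{d-1}$ from Theorem \ref{thm:main}, observe that $\gamma(\link_{\Sigma_{d-1}}(F), x) = 1$ because its $h$-polynomial is $(1+x)^{d-|F|}$, and then read off the nonnegativity conclusion termwise. The paper also offers a first route (substituting the $\gamma$-expansion of each $\ell_F(\Gamma_F,x)$ directly into equation (\ref{eq:corPL}) and interchanging summations), but the two are essentially the same computation packaged differently.
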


\begin{proof}
Setting $\ell_F (\Gamma_F, x) = \sum_i \xi_{F, i} \, x^i (1+x)^{|F| - 2i}$
in (\ref{eq:corPL}) and changing the order of summation, results in
(\ref{eq:corPL2}). Alternatively, one can apply (\ref{eq:gammaformula}) to
the subdivision guaranteed by Theorem \ref{thm:main} and note that $\gamma
(\link_{\Sigma_{d-1}} (F), x) = 1$ for every $F \in \Sigma_{d-1}$. The last
sentence in the statement of the corollary follows from (\ref{eq:corPL2}).
\end{proof}

\begin{corollary} \label{cor:main2}
The validity of Conjecture \ref{conj:main} for homology subdivisions $\Gamma$
of dimension at most $d-1$ implies the validity of Conjecture
\ref{conj:newmonotone} for homology spheres $\Delta$ and subdivisions $\Delta'$
of dimension at most $d-1$.
\end{corollary}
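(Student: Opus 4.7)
The plan is to apply the $\gamma$-analogue of Stanley's subdivision formula, namely Proposition \ref{prop:gammaformula}, to the subdivision $\sigma: \Delta' \to \Delta$ (which makes sense because a homology sphere is Eulerian), and then to isolate the $F = \varnothing$ summand as $\gamma(\Delta,x)$ itself while arguing that every remaining summand has nonnegative coefficients.

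Concretely, I would begin from
\[
\gamma(\Delta', x) \ = \ \sum_{F \in \Delta} \xi_F(\Delta'_F, x) \, \gamma(\link_\Delta(F), x)
\]
and peel off the term $F = \varnothing$. By part (a) of Example \ref{ex:localgamma} applied to the ``empty'' simplex (the case $d=0$) the local $\gamma$-polynomial of the trivial subdivision equals $1$, and $\link_\Delta(\varnothing) = \Delta$, so this term contributes exactly $\gamma(\Delta,x)$. It then remains to show that for each nonempty $F \in \Delta$ both factors $\xi_F(\Delta'_F, x)$ and $\gamma(\link_\Delta(F), x)$ are coefficientwise nonnegative.

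For the first factor, the restriction $\Delta'_F$ is a homology subdivision of the simplex $2^F$ of dimension $|F|-1 \le d-1$; it is flag by Definition \ref{def:flagsub}(ii) (the hypothesis that $\sigma$ is a flag subdivision says exactly that all restrictions are flag) and it is vertex-induced because this property transfers to restrictions immediately from the definition. Hence the standing hypothesis, Conjecture \ref{conj:main} in dimensions at most $d-1$, yields $\xi_F(\Delta'_F) \ge 0$. For the second factor, since $\Delta$ is flag so is $\link_\Delta(F)$, and by Lemma \ref{lem:homology}(i) it is a homology sphere of dimension at most $d-1$. Corollary \ref{cor:main12}, which deduces Conjecture \ref{conj:gal} in these dimensions from the hypothesis, then gives $\gamma(\link_\Delta(F), x) \ge 0$.

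No step should present a genuine obstacle; the only bookkeeping one needs to verify is that both hypotheses (``flag'' and ``vertex-induced'') are inherited by the restrictions $\Delta'_F$ and that the dimensions of $\Delta'_F$ and $\link_\Delta(F)$ remain within the range $\le d-1$ to which the hypothesis applies. The one conceptual point that drives the monotonicity, as opposed to a mere expansion, is the identification of the $F = \varnothing$ contribution with $\gamma(\Delta, x)$ itself.
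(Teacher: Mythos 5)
Your proposal is correct and follows essentially the same route as the paper: apply Proposition \ref{prop:gammaformula}, identify the $F = \varnothing$ summand with $\gamma(\Delta, x)$, and then use Conjecture \ref{conj:main} (for the factors $\xi_F(\Delta'_F, x)$, after noting that flagness and vertex-inducedness pass to restrictions) together with Corollary \ref{cor:main12} and the fact that links of nonempty faces of flag homology spheres are flag homology spheres of smaller dimension (for the factors $\gamma(\link_\Delta(F), x)$). The paper states this more tersely, but the ingredients and their roles are identical.
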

\begin{proof}
We observe that the term corresponding to $F = \varnothing$ in the sum of the
right-hand side of (\ref{eq:gammaformula}) is equal to $\gamma(\Delta, x)$. \
Thus, the result follows from (\ref{eq:gammaformula}), Corollary \ref{cor:main12}
and the fact that the link of every nonempty face of a flag homology sphere is
also a flag homology sphere of smaller dimension.
\end{proof}

\begin{proposition} \label{prop:main3d}
Conjecture \ref{conj:main} holds for subdivisions of the 3-dimensional simplex.
\end{proposition}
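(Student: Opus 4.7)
Proof plan for Proposition~5.5. Let $|V|=4$ and write $\xi_V(\Gamma,x)=\xi_0+\xi_1 x+\xi_2 x^2$ for a flag vertex-induced homology subdivision $\Gamma$ of $2^V$. By Example~\ref{ex:localgamma}(a), $\xi_0=0$, and by Example~\ref{ex:localgamma}(b), $\xi_1=f_0^\circ\ge 0$. Hence the entire conjecture in dimension~3 reduces to showing $\xi_2\ge 0$. My plan is to realize $\Gamma$ as a piece of a flag $3$-dimensional homology sphere and then to isolate $\xi_2$ inside its $\gamma$-polynomial, where nonnegativity is supplied by the Davis--Okun theorem.

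First, I apply Proposition~\ref{prop:balltosphere} to $\Gamma$. Its part (iii) produces a flag, vertex-induced homology subdivision $\Delta$ of $\Sigma_{3}$; by part~(i), $\Delta$ is a $3$-dimensional homology sphere, and it is a flag complex by (iii). The Davis--Okun theorem~\cite{DOk01} therefore yields $\gamma(\Delta)\ge 0$, and in particular the top coefficient $\gamma_2(\Delta)\ge 0$.

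Next, I expand $\gamma(\Delta,x)$ via Proposition~\ref{prop:gammaformula} applied to $\sigma_0\colon\Delta\to\Sigma_{3}$. Since $\link_{\Sigma_{3}}(F)$ is again a simplicial join of $0$-spheres and therefore has $\gamma$-polynomial equal to $1$, this formula collapses to
\[
\gamma(\Delta,x)\ =\ \sum_{F\in\Sigma_{3}}\xi_F(\Delta_F,x).
\]
I now identify which faces $F$ contribute. Every $F\in\Sigma_{3}$ has the form $F=E\cup G$ with $E\subseteq U$ and $G\subseteq V$, and by construction $\Delta_F=2^E\ast\Gamma_G$. The multiplicativity of $h$-polynomials under joins, together with the defining equation (\ref{eq:deflocalh}), gives $\ell_{E\cup G}(2^E\ast\Gamma_G,x)=\ell_E(2^E,x)\,\ell_G(\Gamma_G,x)$; since $\ell_E(2^E,x)$ vanishes for $|E|\ge 1$, only faces $F\subseteq V$ contribute, and for those $\Delta_F=\Gamma_F$. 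Thus
\[
\gamma(\Delta,x)\ =\ \sum_{F\subseteq V}\xi_F(\Gamma_F,x).
\]

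Finally, I read off the coefficient of $x^2$. By Example~\ref{ex:localgamma}(a,c), the contribution $\xi_F(\Gamma_F,x)$ is the constant $1$ if $F=\varnothing$, vanishes if $|F|=1$, and is a multiple of $x$ if $|F|\in\{2,3\}$; the only summand of degree $2$ comes from $F=V$, where it equals $\xi_2 x^2$. Comparing coefficients gives $\xi_2=\gamma_2(\Delta)\ge 0$, as required. The one step that requires verification beyond direct substitution is the join formula for local $h$-polynomials used to eliminate the faces of $\Sigma_{3}$ meeting $U$; this is immediate from the definition and the multiplicativity of $h(\cdot,x)$ under joins, so there is no serious obstacle, and the argument is essentially an application of Davis--Okun through Propositions~\ref{prop:balltosphere} and~\ref{prop:gammaformula}.
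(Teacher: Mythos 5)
Your proof is correct and takes essentially the same route as the paper: embed $\Gamma$ into the flag homology sphere $\Delta$ via Proposition~\ref{prop:balltosphere}, expand $\gamma(\Delta,x)$ via Proposition~\ref{prop:gammaformula}, collapse the sum to faces $F\subseteq V$, identify $\xi_2$ with $\gamma_2(\Delta)$, and invoke Davis--Okun. The only presentational difference is that you eliminate faces $F\not\subseteq V$ by multiplicativity of local $h$-polynomials under joins together with $\ell_E(2^E,x)=0$ for $|E|\ge 1$, whereas the paper cites the fact that subdivisions which are cones have vanishing local $h$-vector; these are equivalent, and your attribution of $\xi_0=0$ to Example~\ref{ex:localgamma}(a) should read part (b), but neither affects the argument.
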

\begin{proof}
Let $\Gamma$ be a flag vertex-induced homology subdivision of the
$(d-1)$-dimensional simplex $2^V$ and let $\Delta$ be the homology subdivision
of $\Sigma_{d-1}$ considered in Proposition \ref{prop:balltosphere}. Applying
(\ref{eq:gammaformula}) to this subdivision and noting that $\gamma
(\link_{\Sigma_{d-1}} (F), x) = 1$ for every $F \in \Sigma_{d-1}$, we get
\[ \gamma (\Delta, x) \ = \ \sum_{F \in \Sigma_{d-1}} \, \xi_F (\Delta_F, x).
\]
By definition of $\Delta$, the restriction $\Delta_F$ is a cone over the
restriction of $\Delta$ to a proper face of $F$ for every $F \in \Sigma_{d-1}$
which is not contained in $V$. Since every such subdivision has a zero local
$h$-vector \cite[p.~821]{Sta92}, the previous formula can be rewritten as
  \begin{equation} \label{eq:corPL3}
    \gamma (\Delta, x) \ = \ \sum_{F \subseteq V} \, \xi_F (\Gamma_F, x).
  \end{equation}

Assume now that $d=4$, so that $\xi (\Gamma, x) = \xi_0 + \xi_1 x + \xi_2 x^2$ for
some integers $\xi_0, \xi_1, \xi_2$. Since $\xi_0 = 0$ and $\xi_1 \ge 0$ by part
(b) of Example \ref{ex:localgamma}, it suffices to show that $\xi_2 \ge 0$. For
that, we observe that the only contribution to the coefficient of $x^2$ in the
right-hand side of (\ref{eq:corPL3}) comes from the term with $F = V$. As a result,
$\xi_2$ is equal to the coefficient of $x^2$ in $\gamma (\Delta, x)$. Since,
$\Delta$ is a 3-dimensional flag homology sphere (by Proposition
\ref{prop:balltosphere}), this coefficient is nonnegative by the Davis-Okun
theorem \cite[Theorem 11.2.1]{DOk01} and the result follows.
\end{proof}

\medskip
\noindent
\emph{Proof of Theorem \ref{thm:34}}. For 3-dimensional spheres the result follows
from Proposition \ref{prop:main3d} and Corollary \ref{cor:main2}.
Assume now that $\Delta$ and $\Delta'$ have dimension 4. Then we can write $\gamma
(\Delta, x) = 1 + \gamma_1 (\Delta) x + \gamma_2 (\Delta) x^2$ and $\gamma (\Delta',
x) = 1 + \gamma_1 (\Delta') x + \gamma_2 (\Delta') x^2$. Since $\gamma_1 (\Delta) =
f_0 (\Delta) - 8$ and $\gamma_1 (\Delta') = f_0 (\Delta') - 8$, where $f_0 (\Delta)$
and $f_0 (\Delta')$ is the number of vertices of $\Delta$ and $\Delta'$, respectively,
it is clear that $\gamma_1 (\Delta') \ge \gamma_1 (\Delta)$. As the computation in
the proof of \cite[Corollary 2.2.2]{Ga05} shows, we also have
\[ 2 \gamma_2 (\Delta) \ = \ \sum_{v \in \vset(\Delta)} \, \gamma_2 (\link_\Delta
(v)), \]
where $\vset(\Delta)$ is the set of vertices of $\Delta$. Similarly, we have
\[ 2 \gamma_2 (\Delta') \ = \ \sum_{v' \in \vset(\Delta')} \, \gamma_2
(\link_{\Delta'} (v')), \]
where we may assume that $\vset(\Delta) \subseteq \vset(\Delta')$. Since
$\link_{\Delta'} (v)$ is a
flag vertex-induced homology subdivision of $\link_{\Delta} (v)$ for every $v \in
\vset(\Delta)$, by Lemma \ref{lem:joinlink}, we have $\gamma_2 (\link_{\Delta'} (v))
\ge \gamma_2 (\link_\Delta (v))$ by the 3-dimensional case, treated earlier, for
every such vertex $v$. Since $\link_{\Delta'} (v')$ is a 3-dimensional flag homology
sphere, we also have $\gamma_2 (\link_{\Delta'} (v')) \ge 0$ by the Davis-Okun
theorem for every $v' \in \vset(\Delta') \sm \vset(\Delta)$. Hence $\gamma_2
(\Delta') \ge \gamma_2 (\Delta)$ and the result follows.
\qed

    \begin{question} \label{que:monotone}
      Does $\gamma (\Delta') \ge \gamma (\Delta)$ hold for every flag homology sphere
      $\Delta$ and every flag homology subdivision $\Delta'$ of $\Delta$?
    \end{question}

  \section{Special cases}
  \label{sec:evidence}

  This section provides some evidence in favor of the validity of Conjecture
  \ref{conj:main} other than that provided by Proposition \ref{prop:main3d}.

  \medskip
  \noindent
  \textbf{Simplicial joins.} Let $\Gamma$ be a homology subdivision of the simplex $2^V$
  and $\Gamma'$ be a homology subdivision of the simplex $2^{V'}$, where $V$ and $V'$ are
  disjoint finite sets. Then $\Gamma \ast \Gamma'$ is a homology subdivision of
  the simplex $2^V \ast 2^{V'} = 2^{V \cup V'}$ and given subsets $F \subseteq V$ and $F'
  \subseteq V'$, the restriction of $\Gamma \ast \Gamma'$ to the face $F \cup F'$ of this
  simplex satisfies $(\Gamma \ast \Gamma')_{F \cup F'} = \Gamma_F \ast \Gamma'_{F'}$.
  Since $h ( \Gamma_F \ast \Gamma'_{F'}, x) = h (\Gamma_F, x) \, h (\Gamma'_{F'}, x)$,
  the defining equation (\ref{eq:deflocalh}) and a straightforward computation show that
    $$ \ell_{V \cup V'} \, (\Gamma \ast \Gamma', x)   \ = \ \ell_V (\Gamma, x) \, \ell_{V'}
       (\Gamma', x). $$
  This equation and (\ref{eq:defxi}) imply that
    \begin{equation} \label{eq:xiproduct}
      \xi_{V \cup V'} \, (\Gamma \ast \Gamma', x) \ = \ \xi_V (\Gamma, x) \, \xi_{V'}
      (\Gamma', x).
    \end{equation}
  From the previous formula and Lemma \ref{lem:joinlink} we conclude that if $\Gamma$ and
  $\Gamma'$ satisfy the assumptions and the conclusion of Conjecture \ref{conj:main}, then
  so does $\Gamma \ast \Gamma'$.

  \medskip
  \noindent
  \textbf{Edge subdivisions.} Following \cite[Section 5.3]{CD95}, we refer to the
  stellar subdivision on an edge of a simplicial complex $\Gamma$ as an \emph{edge
  subdivision}. As mentioned in Section \ref{subsec:gamma}, flagness of a simplicial
  complex is preserved by edge subdivisions. The following statement describes a class
  of flag (geometric) subdivisions of the simplex with nonnegative local
  $\gamma$-vectors.
    \begin{proposition} \label{prop:edgesub}
      For every subdivision $\Gamma$ of the simplex $2^V$ which can be obtained from
      the trivial subdivision by successive edge subdivisions, we have $\xi_V (\Gamma)
      \ge 0$.
    \end{proposition}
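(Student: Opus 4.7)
The plan is to proceed by induction on the number of edge subdivisions used to construct $\Gamma$ from the trivial subdivision $2^V$. For the base case, Example \ref{ex:localgamma}(a) gives $\xi_V(2^V) \ge 0$, since $\xi_V(2^V, x) = 0$ whenever $d \ge 1$.

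For the inductive step, let $\Gamma'$ denote the edge subdivision of $\Gamma$ at an edge $e \in \Gamma$. Applying Proposition \ref{prop:localformula} to the subdivision $\Gamma' \to \Gamma$ gives $\ell_V(\Gamma', x) = \sum_{E \in \Gamma} \ell_E(\Gamma'_E, x) \, \ell_V(\Gamma, E, x)$. The key observation is that $\ell_E(\Gamma'_E, x)$ vanishes for every $E \in \Gamma$ except $E = \varnothing$ (value $1$) and $E = e$ (value $x$). Indeed, if $e \not\subseteq E$ then $\Gamma'_E$ is the trivial subdivision of $2^E$, whose local $h$-polynomial vanishes in positive dimension by Example \ref{ex:localgamma}(a); if $E \supsetneq e$ then $\Gamma'_E$ decomposes as the simplicial join $2^{E \sm e} \ast \Gamma'_e$, whose first factor is a trivial subdivision of positive dimension, so multiplicativity of local $h$-polynomials under joins again forces $\ell_E(\Gamma'_E, x) = 0$; finally, a direct calculation for the edge subdivision of a $1$-simplex yields $\ell_e(\Gamma'_e, x) = x$. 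Hence
\[
\ell_V(\Gamma', x) \ = \ \ell_V(\Gamma, x) \, + \, x \cdot \ell_V(\Gamma, e, x).
\]
Translating via (\ref{eq:defxi}) and using that $\ell_V(\Gamma, e, x)$ has symmetric coefficients of ``degree'' $d-2$ (Remark \ref{rem:relative}), this becomes
\[
\xi_V(\Gamma', x) \ = \ \xi_V(\Gamma, x) \, + \, x \cdot \tilde{\xi}_V(\Gamma, e, x),
\]
where $\tilde{\xi}_V(\Gamma, e, x)$ is the $\gamma$-polynomial associated to the symmetric polynomial $\ell_V(\Gamma, e, x)$. By the inductive hypothesis $\xi_V(\Gamma, x) \ge 0$, so it remains to show $\tilde{\xi}_V(\Gamma, e, x) \ge 0$.

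This last step is the main obstacle, and I would attack it by a case analysis on the carrier $\sigma(e) \subseteq V$. When $\sigma(e) = V$, only the term $F = V$ survives in the defining sum for $\ell_V(\Gamma, e, x)$, giving $\ell_V(\Gamma, e, x) = h(\link_\Gamma(e), x)$ with $\link_\Gamma(e)$ a flag homology sphere of dimension $d-3$; the required nonnegativity $\tilde{\xi}_V(\Gamma, e, x) = \gamma(\link_\Gamma(e), x) \ge 0$ would be obtained by an outer induction on $d$, leveraging the fact that $\link_\Gamma(e)$ inherits an iterated-edge-subdivision structure. When $|\sigma(e)| = 2$, one identifies $\link_\Gamma(e)$ with an iterated edge subdivision of $2^{V \sm \sigma(e)}$ via the natural map $G \mapsto \sigma(G \cup e) \sm \sigma(e)$, yielding $\ell_V(\Gamma, e, x) = \ell_{V \sm \sigma(e)}(\link_\Gamma(e), x)$ and hence $\tilde{\xi}_V(\Gamma, e, x) = \xi_{V \sm \sigma(e)}(\link_\Gamma(e), x) \ge 0$ by the main induction. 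The intermediate range $2 < |\sigma(e)| < d$ would be settled by a pairing argument on the alternating sum defining $\ell_V(\Gamma, e, x)$: exhibiting a cone structure on $\link_\Gamma(e)$ relative to suitable vertices of $V \sm \sigma(e)$ (a feature special to iterated edge subdivisions) should collapse the sum to zero. The bulk of the technical work lies in verifying these structural assertions about the links in iterated edge subdivisions of the simplex.
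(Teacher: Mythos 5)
Your opening moves coincide exactly with the paper's: via Proposition~\ref{prop:localformula} you reduce the edge subdivision $\Gamma \to \Gamma'$ to the recursion $\ell_V(\Gamma', x) = \ell_V(\Gamma, x) + x\,\ell_V(\Gamma, e, x)$, and the translation to $\gamma$-polynomials is correct. The divergence, and the gap, is in how $\ell_V(\Gamma, e, x)$ is handled.

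The paper's key move, which you miss, is to \emph{strengthen the induction}: it proves, by induction on the number of edge subdivisions, that the $\gamma$-polynomial associated to $\ell_V(\Gamma, E, x)$ has nonnegative coefficients for \emph{every} face $E \in \Gamma$ of positive dimension, not merely for $E = \varnothing$ and the single edge $e$ being subdivided. With this richer inductive hypothesis, one shows by a short case analysis on $E \in \Gamma'$ (depending on whether $E$ is in $\Gamma \setminus \link_\Gamma(e)$, in $\link_\Gamma(e)$, or contains the new vertex $v$) that each $\ell_V(\Gamma', E, x)$ is one of $\ell_V(\Gamma, E, x)$, $\ell_V(\Gamma, E, x) + x\,\ell_V(\Gamma, E\cup e, x)$, $\ell_V(\Gamma, E', x)$, or $(1+x)\,\ell_V(\Gamma, E', x)$, so nonnegativity of $\gamma$-coefficients propagates. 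This is entirely self-contained and works uniformly in $d$.

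Your alternative — casing on $|\sigma(e)|$ — runs into serious trouble. When $\sigma(e) = V$ you reduce to showing $\gamma(\link_\Gamma(e), x) \ge 0$ for a flag homology sphere $\link_\Gamma(e)$ of dimension $d-3$, and you propose to get this from ``the fact that $\link_\Gamma(e)$ inherits an iterated-edge-subdivision structure'' plus an outer induction on $d$. This is not justified and appears essentially circular: even if $\link_\Gamma(e)$ were a vertex-induced subdivision of $\Sigma_{d-3}$ (as Theorem~\ref{thm:main} guarantees), Corollary~\ref{cor:main12} would only yield $\gamma(\link_\Gamma(e)) \ge 0$ if the \emph{restrictions} to faces of $\Sigma_{d-3}$ had nonnegative local $\gamma$-vectors — and there is no reason those restrictions are themselves iterated edge subdivisions of simplices, so the outer induction does not apply. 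In effect you would need Gal's conjecture in dimension $d-3$, which is open for $d \ge 8$, whereas the proposition must hold for all $d$. The claim that the alternating sum collapses to zero for $2 < |\sigma(e)| < d$ (your ``pairing argument'') is also left unverified and is nontrivial; the dimension count already shows the proposed map $G \mapsto \sigma(G\cup e)\setminus\sigma(e)$ is not a homology subdivision of $2^{V\setminus\sigma(e)}$ when $|\sigma(e)| > 2$, so the identification you use for $|\sigma(e)|=2$ does not carry over. The paper's strengthened-induction argument avoids all of these issues, and you should look for a way to carry along information about all the relative local $h$-polynomials rather than just the one attached to the subdivided edge.
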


    \begin{proof}
      Let $\Gamma$ be a subdivision of $2^V$ and $\Gamma'$ be the edge subdivision of
      $\Gamma$ on $e = \{a, b\} \in \Gamma$. Thus we have $\Gamma' = ( \Gamma \sm
      \st_\Gamma(e) ) \cup ( \{v\} \ast \partial(e) \ast \link_\Gamma (e) )$, where
      $v$ is the new vertex added and $\partial(e) = \{ \varnothing, \{a\}, \{b\} \}$.

      By appealing to (\ref{eq:localformula}) and noticing that the right-hand side of
      this formula vanishes except when $E \in \{ \varnothing, e\}$ (or by direct
      computation), we find that
        $$ \ell_V (\Gamma', x) \ = \ \ell_V (\Gamma, x) \, + \, x \, \ell_V (\Gamma,
           e, x). $$
      Thus it suffices to prove the following claim: if the $\gamma$-polynomial
      corresponding to $\ell_V (\Gamma, E, x)$ has nonnegative coefficients for every
      face $E \in \Gamma$ of positive dimension (meaning that $\ell_V (\Gamma, E, x)$
      can be written as a linear combination of the polynomials $x^i (1+x)^{d - |E| -
      2i}$ with nonnegative coefficients for every $|E| \ge 2$), then the same holds
      for $\Gamma'$. We consider a face $E \in \Gamma'$ of positive dimension and
      distinguish the following cases (we note that $E$ cannot contain $e$ and that if
      $E \in \Gamma$, then the carrier $\sigma(E) \subseteq V$ of $E$ is the same,
      whether considered with respect to $\Gamma$ or $\Gamma'$):

      \medskip
      \noindent {\sf Case 1:} $E \in \Gamma \sm \link_\Gamma (e)$. The links
      $\link_{\Gamma'_F} (E)$ and $\link_{\Gamma_F} (E)$ are then combinatorially
      isomorphic for every $F \subseteq V$ which contains the carrier of $E$ (these
      two links are equal if $E \cup e \notin \Gamma$) and the defining equation
      (\ref{eq:deflocalhrel}) implies that $\ell_V (\Gamma', E, x) = \ell_V (\Gamma,
      E, x)$.

      \medskip
      \noindent {\sf Case 2:} $E \in \link_\Gamma (e)$. For $F \subseteq V$ which
      contains the carrier of $E$, the link $\link_{\Gamma'_F} (E)$ is equal to either
      $\link_{\Gamma_F} (E)$ or to the edge subdivision of $\link_{\Gamma_F} (E)$ on
      $e$, in case $F$ does not or does contain the carrier of $e$, respectively. It
      follows from this and (\ref{eq:hformula}) that (see also
      \cite[Proposition 2.4.3]{Ga05})
        $$ h(\link_{\Gamma'_F} (E), x) \ = \ \begin{cases}
        h(\link_{\Gamma_F} (E), x), & \text{if \ $\sigma(e) \not\subseteq F$} \\
        h(\link_{\Gamma_F} (E), x) \, + \, x \, h(\link_{\Gamma_F} (E \cup e), x), &
        \text{if \ $\sigma(e) \subseteq F$} \end{cases} $$
      and then from (\ref{eq:deflocalhrel}) that $\ell_V (\Gamma', E, x) = \ell_V
      (\Gamma, E, x) + x \ell_V (\Gamma, E \cup e, x)$.

      \medskip
      \noindent {\sf Case 3:} $E \notin \Gamma$. Then we must have $E \in \{v\} \ast
      \partial(e) \ast \link_\Gamma (e)$ and, in particular, $v \in E$. We distinguish
      two subcases:

      Suppose first that $E$ intersects $e$ and set $E' = (E \sm \{v\}) \cup e$. Then
      $\link_{\Gamma'_F} (E) = \link_{\Gamma_F} (E')$ for every $F \subseteq V$ which
      contains the carrier of $E$ in $\Gamma'$ (the latter coincides with the carrier
      of $E'$ in $\Gamma$) and hence $\ell_V (\Gamma', E, x) = \ell_V (\Gamma, E', x)$.

      Suppose finally that $E \cap e = \varnothing$ and set $E' = (E \sm \{v\}) \cup e$.
      Then $\link_{\Gamma'_F} (E) = \link_{\Gamma_F} (E') \ast \partial(e)$ for every
      $F \subseteq V$ which contains the carrier of $E$ in $\Gamma'$. Therefore we have
      $h(\link_{\Gamma'_F} (E) , x) = (1+x) \, h(\link_{\Gamma_F} (E') , x)$ for every
      such $F$ and hence $\ell_V (\Gamma', E, x) = (1+x) \, \ell_V (\Gamma, E', x)$.

      \medskip
      The expressions obtained for $\ell_V (\Gamma', E, x)$ and our assumption on
      $\Gamma$ show that indeed, the corresponding $\gamma$-polynomial has nonnegative
      coefficients in all cases.
    \end{proof}

\medskip
\noindent
\textbf{Barycentric and cluster subdivisions.}  As a special case of Proposition
\ref{prop:edgesub}, the (first) barycentric subdivision of the simplex $2^V$ has
nonnegative local $\gamma$-vector. Several combinatorial interpretations for its
entries are given in \cite{AS11}. Similar results appear there for the simplicial
subdivision of a simplex defined by the positive part of the cluster complex,
associated to a finite root system.

The following special case of Conjecture \ref{conj:main} might also be
interesting to explore. The notion of a CW-regular subdivision can be defined by
replacing the simplicial complex $\Delta'$ in the definition of a topological
subdivision (Definition \ref{def:sub}) by a regular CW-complex; see \cite[p. 839]{Sta92}.
  \begin{question} \label{que:bary}
    Does Conjecture \ref{conj:main} hold for the barycentric subdivision of any
    CW-regular subdivision of the simplex?
  \end{question}

  \section*{Acknowledgments} The author wishes to thank an anonymous referee for
  helpful suggestions and Mike Davis and Volkmar Welker for useful discussions.


\begin{thebibliography}{xxx}
  %
  \bibitem{Ath11}
  C.A.~Athanasiadis,
  \textit{Some combinatorial properties of flag simplicial pseudomanifolds and
  spheres},
  Ark. Mat. {\bf~49} (2011), 17--29.
  %
  \bibitem{Ath10}
  C.A.~Athanasiadis,
  \textit{Cubical subdivisions and local $h$-vectors},
  {\tt arXiv:1007.3154}, Ann. Comb. (to appear).
  %
  \bibitem{AS11}
  C.A.~Athanasiadis and C.~Savvidou,
  \textit{The local $h$-vector of the cluster subdivision of a simplex},
  S\'em. Lothar. Combin. {\bf~66} (2012), Article B66c, 21pp (electronic).
  %
  \bibitem{Bar10}
  J.A.~Barmak,
  \textit{Star clusters in independence complexes of graphs},
  {\tt arXiv:1007.0418}.
  %
  \bibitem{Bj95}
  A.~Bj\"orner,
  \textit{Topological methods}, in \textit{Handbook of Combinatorics}
  (R.L.~Graham, M.~Gr\"otschel and L.~Lov\'asz, eds.),
  North Holland, Amsterdam, 1995, pp.~1819--1872.
  %
  \bibitem{BSZ99}
  A.~Bj\"orner, M.~Las~Vergnas, B.~Sturmfels, N.~White and G.M.~Ziegler,
  Oriented Matroids,
  Encyclopedia of Mathematics and Its Applications {\bf~46}, Cambridge
  University Press, Cambridge, 1993; second printing, 1999.
  %
  \bibitem{Cha94}
  C.~Chan,
  \textit{On subdivisions of simplicial complexes: characterizing local
  $h$-vectors},
  Discrete Comput. Geom. {\bf~11} (1994), 465--476.
  %
  \bibitem{CD95}
  R.~Charney and M.~Davis,
  \textit{The Euler characteristic of a nonpositively curved, piecewise
  Euclidean manifold},
  Pacific J. Math. {\bf~171} (1995), 117--137.
  %
  \bibitem{DOk01}
  M.~Davis and B.~Okun,
  \textit{Vanishing theorems and conjectures for the $L^2$-homology of
  right-angled Coxeter groups},
  Geom. Topol. {\bf~5} (2001), 7--74.
  %
  \bibitem{Ga05}
  S.R.~Gal,
  \textit{Real root conjecture fails for five- and higher-dimensional
  spheres},
  Discrete Comput. Geom. {\bf~34} (2005), 269--284.
  %
  \bibitem{Mun84}
  J.R.~Munkres,
  Elements of Algebraic Topology,
  Addison-Wesley, Reading, MA, 1984.
  %
  \bibitem{PRW08}
  A.~Postnikov, V.~Reiner and L.~Williams,
  \textit{Faces of generalized permutohedra},
  Doc. Math. {\bf~13} (2008), 207--273.
  %
  \bibitem{Sta92}
  R.P.~Stanley,
  \textit{Subdivisions and local $h$-vectors},
  J. Amer. Math. Soc. {\bf~5} (1992), 805--851.
  %
  \bibitem{StaEC1}
  R.P.~Stanley,
  Enumerative Combinatorics, vol.~1,
  Wadsworth \& Brooks/Cole, Pacific Grove, CA, 1986;
  second printing,
  Cambridge Studies in Advanced Mathematics {\bf~49},
  Cambridge University Press, Cambridge, 1997.
  %
  \bibitem{StaCCA}
  R.P.~Stanley,
  Combinatorics and Commutative Algebra,
  second edition, Birkh\"auser, Basel, 1996.
  %
  \end{thebibliography}
  \end{document}